\numberwithin{equation}{section}
\newtheorem{theorem}{Theorem}[section]
\newtheorem{proposition}[theorem]{Proposition}
\newtheorem{lemma}[theorem]{Lemma}
\newtheorem{Definition}[theorem]{Definition}
\newtheorem{Remark}[theorem]{Remark}
\newenvironment{remark}{\begin{Remark}\rm}{\end{Remark}}
\newtheorem{RHproblem}[theorem]{RH problem}
\newcommand{\A}{\mathbb{A}}
\newcommand{\C}{\mathbb{C}}
\newcommand{\D}{\mathbb D}
\newcommand{\Z}{\mathbb{Z}}
\newcommand{\N}{\mathbb{N}}
\newcommand{\R}{\mathbb{R}}
\newcommand{\T}{\mathbb{T}}
\renewcommand{\H}{\mathbb{H}}
\newcommand{\CC}{\mathcal C}
\newcommand{\DD}{\mathcal D}
\newcommand{\Scal}{\mathcal{S}}
\newcommand{\OO}{\mathcal{O}}
\renewcommand{\Re}{{\rm Re} \,}
\renewcommand{\Im}{{\rm Im} \,}
\renewcommand{\bar}{\overline}
\renewcommand{\tilde}{\widetilde}
\newcommand{\Div}{{\rm Div} \,}
\begin{document}
\title{A boundary value problem for conjugate conductivity equations}
%the homogeneous Grad--Shafranov equation}
\author{S.\ Chaabi, S.\ Rigat and F.\ Wielonsky}

\maketitle 

\begin{abstract}
We give explicit integral formulas for the solutions of planar conjugate conductivity equations
in a circular domain of the right half-plane with conductivity $\sigma(x,y)=x^{p}$, $p\in\Z^{*}$.
The representations are obtained via a Riemann-Hilbert problem on the complex plane when $p$ is even and on a two-sheeted Riemann surface when $p$ is odd. They involve the Dirichlet and Neumann data on the boundary of the domain. We also show how to make the conversion from one type of conditions to the other by using the so-called global relation. The method used to derive our integral representations could be applied in any bounded simply-connected domain of the right half-plane with a smooth boundary.
\end{abstract}
{\bf Keywords:} Boundary value problem, Riemann-Hilbert problem, Lax pair
\\[\baselineskip]
{\bf MCS:} 35J25, 35Q15, 35J15
\section{Introduction and main results}
We seek explicit integral expressions for real-valued solutions of the Neumann/Dirichlet problem for the following two conductivity equations, considered in a bounded simply connected domain $\Omega$ of the right half-plane $\H=\{(x,y)\in\R^{2},~x>0\}$,
\begin{equation}\label{cond-eq}
\Div(\sigma\nabla u)=0,\qquad
\Div\Big(\frac{1}{\sigma}\nabla v\Big)=0,
\end{equation}
%or equivalently
%\begin{equation}\label{Ernst}
%\Delta u-\frac1x\partial_{x} u=0,\qquad
%\Delta v+\frac1x\partial_{x} v=0,
%\end{equation}
for the particular case of conductivity $\sigma(x,y)=x^{p}$, $p\in\Z$. 
%where the conductivity $\sigma(x)=1/x$ has no singularity. 
For a general conductivity $\sigma(x,y)>0$, $x\in\H$, the two equations are conjugate in the following sense. If the function $u$ satisfies the first equation in (\ref{cond-eq}) then the differential form $-\sigma\partial_{y}udx+\sigma\partial_{x}udy$ is closed, so by applying Poincar\'e theorem in the simply connected $\Omega$, there exists a function $v$ such that
\begin{equation}\label{CR}
\partial_{x}v=-\sigma\partial_{y}u,\qquad\partial_{y}v=\sigma\partial_{x}u.
\end{equation}
Then, $v$ solves the second equation in (\ref{cond-eq}). Conversely, for such a $v$, the differential form $\sigma^{-1}\partial_{y}vdx-\sigma^{-1}\partial_{x}vdy$ is closed and there exists a function $u$ that satisfies (\ref{CR}) and the first equation in (\ref{cond-eq}). Moreover, letting
$$\nu=(1-\sigma)/(1+\sigma)\in\R,$$
we have that (\ref{CR}) is equivalent to the so-called conjugate Beltrami equation,
\begin{equation}\label{Belt-conj}
\bar\partial f=\nu\bar{\partial f},
\end{equation}
for the complex-valued function $f=u+iv$  in $\Omega$. This equation differs substantially from the classical and extensively studied Beltrami equation $\bar\partial f=\nu{\partial f}$. The link between the conductivity equations (\ref{cond-eq}) and (\ref{Belt-conj}) was used in \cite{AP} to investigate the Dirichlet to Neumann map. For a detailed study of the conjugate Beltrami equation (\ref{Belt-conj}) see e.g. \cite{BLRR}. 

When $\sigma=1$, the equations in (\ref{cond-eq}) reduce to the Laplace equation and (\ref{CR})
to the Cauchy-Riemann equations. When $\sigma$ differs from 1, we may speak, by analogy, of a $\sigma$-harmonic function $u$ and a $\nu$-holomorphic function $f$.
For the conductivity $\sigma(x,y)=x^{p}$, $p\in\N^{*}$, the equations in (\ref{cond-eq}) rewrite as
\begin{align}\label{Ernst}
\Delta u+\frac{p}{x}\partial_{x} u=0,\\[10pt]\label{Grad-S}
\Delta v-\frac{p}{x}\partial_{x} v=0.
\end{align}
These equations are particular case of the general elliptic equation
\begin{equation}\label{GASPT}
\Delta u+\frac{\alpha}{x}\partial_{x}u=0,\qquad \alpha\in\R,
\end{equation}
which is the topic of the so-called generalized axially symmetric potential theory (GASPT), see \cite{Wein2}. The name comes from the relation with the Laplacian in spaces of higher dimensions. For instance, let $(r,\varphi,z)$ denotes the usual cylindrical coordinates in $\R^{3}$, $\Omega$ be a domain in the $(x,z)$-plane, and $\Omega'\subset\R^{3}$ be the domain obtained by rotation of $\Omega$ about the $z$-axis. Then, 
$u(r,z)$ is a solution of $\Delta u(r,z)+r^{-1}\partial_{r}u(r,z)=0$ in $\Omega$ if and only if $u(r,z)$ is harmonic in $\Omega'$. Actually, a similar relation holds for the more general equation (\ref{Ernst}) when $p\in\N^{*}$. Indeed, if $r=(x_{1}^{2}+\cdots+x_{p+1}^{2})^{1/2}$ and $
U(x_{1},\ldots,x_{p+2})=u(r,x_{p+2})$ then one checks easily that
$$\Delta U(x_{1},\ldots,x_{p+2})=\Delta u(r,x_{p+2})+\frac{p}{r}\partial_{r}u.$$
When considering a domain $\Omega$ with some simple geometry, this link with the Laplacian allows one to get explicit bases of solutions via the method of separation of variables. For instance, toroidal harmonics (i.e. Legendre functions with half-integer degrees, see e.g. \cite{Leb}) give a complete set of solutions to (\ref{GASPT}) when $\alpha=\pm1$, cf. \cite{Mor, All, Van}. The fact that generalized Legendre functions can be used to solve (\ref{GASPT}) for complex values of $\alpha$ appears in \cite{Chab, Chab-Rig}. 

The GASPT theory has been investigated by many authors, among whom Weinstein \cite{Wein0,Wein2,Wein,Wein3}, Vekua \cite{Vek}, Gilbert \cite{Gil1,Gil2,Gil3,Gil4}, Henrici \cite{Hen1,Hen2,Hen3}, Mackie \cite{Mac}, Ranger \cite{Ran}.
Equation (\ref{GASPT}) is related to a variety of problems in physics. In particular, when $p=1$, equation (\ref{Ernst}) can be interpreted as the linearized version of the Ernst equation in the case of a static spacetime. In \cite{LF}, this equation was studied in the unbounded domain consisting of the exterior of the real segment $(0,\rho_{0})$ in the right half-plane $\H$ and was related to the relativistic gravitational field produced by a rotating disk of matter. Equation (\ref{Grad-S}) with $p=1$ is related to the behavior of plasma in a tokamak. The goal of this device, which has a toroidal geometry, is to control location of the plasma in its chamber by applying magnetic fields on the boundary. From an assumption of axial symmetry, the problem is reduced to a plane section. Then, it follows from the Grad-Shafranov equation, a second-order elliptic nonlinear partial differential equation, see \cite{SHA, BL}, that the magnetic flux in the vacuum between the plasma and the circular boundary of the chamber satisfies the homogeneous equation (\ref{Grad-S}). Note, that in this instance, the conductivity equation (\ref{Grad-S}) takes place in an annular domain, that is a doubly connected domain.

%An interesting property of equations (\ref{Ernst})-(\ref{Grad-S}) is their relation with the Laplacian in $\R^{3}$. Indeed, Also,
%$u(x,y)$ is a solution of (\ref{Ernst}) in $\Omega$ if and only if
%$r^{-1}u(r,z)\cos\varphi$ is harmonic in $\Omega'$ 

Another interesting feature of equation (\ref{GASPT}) is that it satisfies recurrence and symmetry relations with respect to the coefficient $\alpha$. Namely, denoting by ($E_{\alpha}$) the equation (\ref{GASPT}), the following holds:
\begin{equation}\label{recur-eq}
u(x,y) \text{ solves } (E_{\alpha}) \text{ if and only if }x^{-1}\partial_{x}u(x,y)\text{ solves }(E_{\alpha+2}),
\end{equation}
%the same equation with $\alpha$ replaced with $\alpha+2$. 
\begin{equation}\label{sym-eq}
u(x,y)\text{ solves }(E_{\alpha})\text{ if and only if } 
x^{\alpha-1}u(x,y)\text{ solves }(E_{2-\alpha}),
\end{equation}
%the same equation with $\alpha$ replaced with $2-\alpha$.
see e.g. \cite{Wein}. Finally, note that, since
equation (\ref{GASPT}) is elliptic, we know, by general results, see e.g. \cite{GT}, that the Dirichlet problem with datas on the boundary $\partial\Omega$ of the domain has a unique solution, which is $C^{\infty}$ in $\Omega$.

In the present paper, we will stick, for simplicity, to the case of a domain $\Omega$ being the open disk $\DD_{a}=D(a,1)$, lying in the right half-plane $\H$,
%=\{z=x+iy,~x>0\}$, 
with $a>1$ a point on the positive real axis. As briefly explained at the end of Section \ref{RH}, the approach used to derive our integral representations could be adapted to a general Jordan domain $\Omega$ in $\H$ bounded by a smooth curve.

%Instead of (\ref{Ernst}), we consider the conjugate equation for a real-valued function $v$,
%\begin{equation}\label{CGS}
%\Delta v+{1\over x}{\partial v\over\partial x}=0,
%\end{equation}
%with Neumann condition on the boundary of the disk. 
%Also, for $k$ on $\CC$, we denote by $\widetilde k$ the other point on $\CC$ with the same imaginary part.

The first results of our study are the following Theorems \ref{expl-even} and \ref{expl-odd}, which give explicit integral representations for the solutions of equation (\ref{GASPT}) when $\alpha$  is an even and odd negative integer respectively. The method of proof follows the general scheme of the so-called unified transform method, see \cite{F}. It uses a Lax pair, from which one may define a function $\phi(z,k)$ depending on a spectral parameter $k$, and which can be characterized as the solution of a specific singular Riemann-Hilbert problem in the $k$-plane.
Solving this Riemann-Hilbert problem leads to the seeked integral representations.

In the sequel, for $z$ inside the disk $\DD_{a}$, we denote by $z_{r}$ the point on the circle $\CC_{a}=C(a,1)$ which has the same imaginary part as $z$ and lies to its right. 
\begin{theorem}\label{expl-even}
Let $u$ be a solution of the equation 
$$\Delta u+\alpha x^{-1}\partial_{x}u=0,\quad \alpha=-2m,\quad m\in\N,$$
in the domain $\DD_{a}$ with smooth tangential and (outer) normal derivatives $u_{t}$ and $u_{n}$ on the boundary $\CC_{a}$. Then $u$ admits the integral representation
\begin{equation}\label{u-expl}
u(z)=-\frac{1}{\pi}\Im\int_{(z,z_{r})}\big((k-z)(k+\bar{z})\big)^{m}J(z,k)dk+
2\Re(a_{r})+u(z_{r}),\qquad z\in \DD_{a},
\end{equation}
where integration is on the segment $(z,z_{r})$, and the quantity $a_{r}$ can be explicitly computed in terms of the tangential derivatives along $\CC_{a}$ of $u_{t}$ and $u_{n}$, up to order $m-1$, at $z_{r}$. The function $J(z,k)$ is given by
$$J(z,k)=-\int_{\CC_{a}}W(z',k),$$
where $W(z,k)$ is the differential form
\begin{align}\label{W-z-zbar}
W(z,k) & =\big((k-z)(k+\bar z)\big)^{-m-1}\left((k+\bar z)u_{z}(z)dz+(k-z)u_{\bar z}(z)d\bar z\right)\\
& = \big((k-z)(k+\bar z)\big)^{-m-1}\left((k-iy)u_{t}(z)+ixu_{n}(z)\right)ds,
\end{align}
with $z=x+iy$ and $ds$ the length element on $\CC_{a}$.
\end{theorem}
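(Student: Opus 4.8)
The plan is to follow the unified transform method exactly as advertised: construct a Lax pair for the equation $\Delta u + \alpha x^{-1}\partial_x u = 0$ with $\alpha = -2m$, and exploit the special structure that emerges for even negative $\alpha$. First I would rewrite the equation in complex coordinates $z = x+iy$, $\bar z = x-iy$, so that $x = (z+\bar z)/2$ and $\partial_x = \partial_z + \partial_{\bar z}$, $\Delta = 4\partial_z\partial_{\bar z}$; the equation becomes $4\partial_z\partial_{\bar z}u - \tfrac{4m}{z+\bar z}(\partial_z u + \partial_{\bar z}u) = 0$. The crucial observation is that, because $\alpha = -2m$ is an even integer, the factor $(z+\bar z)^{-2m}$ combines with powers of $(k-z)$ and $(k+\bar z)$ in a way that makes things rational in the spectral variable $k$. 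Concretely, I expect one checks that the differential form $W(z,k)$ in \eqref{W-z-zbar} is closed in $z$ for each fixed $k$ (away from its singularities): computing $dW = (\partial_{\bar z}(\text{coeff of }dz) - \partial_z(\text{coeff of }d\bar z))\,d\bar z\wedge dz$ and using the PDE should give zero. This closedness is the Lax-pair compatibility condition in disguise, and verifying it is the first key step.

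Next, I would define the spectral function $J(z,k) = -\int_{\CC_a}W(z',k)$ and, using Stokes/Green on the region between $\CC_a$ and a small circle (or the segment $(z,z_r)$), relate $\int_{(z,z_r)}\big((k-z)(k+\bar z)\big)^m J(z,k)\,dk$ back to $u(z)$. The mechanism is that $W(z,k)$ has, as a function of $k$, a pole of order $m+1$ at $k = z$ and at $k = -\bar z$; the weight $\big((k-z)(k+\bar z)\big)^m$ removes $m$ orders, leaving simple poles, and the residue at $k=z$ (inside the contour $(z,z_r)$ in the appropriate sense) reconstructs $u(z)$ up to the boundary term $u(z_r)$ and the correction $2\Re(a_r)$. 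Computing the residue at $k = z$: near $k=z$, $\big((k-z)(k+\bar z)\big)^m J(z,k)$ behaves like $(k-z)^m\cdot(\text{something})$ and $J$ has a pole of order $m+1$, so the residue picks out an $m$-th order Taylor coefficient, which is exactly why $a_r$ depends on tangential derivatives of $u_t, u_n$ up to order $m-1$. The term $2\Re(a_r)$ should arise from the residue at $k = -\bar z$ (or from a reference-point evaluation in the Riemann-Hilbert normalization), and tracking it carefully is where the bookkeeping gets delicate.

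The passage between the two lines of \eqref{W-z-zbar} is a routine but necessary computation: on $\CC_a$, writing $z = a + e^{i\theta}$, one has $dz = i e^{i\theta}d\theta = i(z-a)d\theta$ and the length element $ds = d\theta$ (since the radius is $1$), together with $u_z\,dz + u_{\bar z}\,d\bar z = du$ and $u_z\,dz - u_{\bar z}\,d\bar z$ being related to the normal derivative; splitting $du = u_t\,ds + (\text{tangential part})$ and using that the outer normal on $C(a,1)$ points in the direction $e^{i\theta} = z - a$, one rewrites $(k+\bar z)u_z\,dz + (k-z)u_{\bar z}\,d\bar z$ in terms of $u_t$ and $u_n$. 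I would carry this out by expressing $u_z = \tfrac12(u_n - iu_t)\overline{\tau}$-type relations, where $\tau$ is the unit tangent, and collecting the coefficient of $ds$; the appearance of $(k - iy)u_t + ix u_n$ is a clean consequence once one substitutes $\bar z$-terms and separates real/imaginary structure relative to the center $a$ being real.

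The main obstacle will be establishing precisely the Riemann-Hilbert problem that $\phi(z,k)$ (or $J(z,k)$) solves and extracting the representation from it — in particular, controlling the behavior at the singular points $k = z$ and $k = -\bar z$ where the jump/pole structure is of order $m+1$, and showing that the contributions assemble into the stated formula with the correct constant $2\Re(a_r) + u(z_r)$ rather than some other combination of boundary data. A secondary difficulty is justifying the contour manipulations (the choice of the segment $(z, z_r)$ and the deformation of $k$-contours) given the multiple poles; one must check that no spurious residues are picked up and that the integral over $(z,z_r)$ converges. The closedness of $W$ and the final residue computation are, by contrast, essentially mechanical once the setup is in place.
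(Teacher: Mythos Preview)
Your general framework---closedness of $W$, a spectral function, a Riemann--Hilbert problem in $k$---is the right one and matches the paper's, but the mechanism by which $u(z)$ is extracted is misidentified, and the role of $a_r$ is placed at the wrong points.

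The paper does not compute residues at $k=z$ or $k=-\bar z$. The central object is the function $\phi(z,k)=\int_{z_r}^z W(z',k)$, defined along paths that run around the boundary (via $\CC_{up}$ or $\CC_{low}$ according to the sign of $\Im k-\Im z$) and then along the horizontal segment from $z_l$ to $z$. As a function of $k$, $\phi$ is analytic off the horizontal line through $z$ and $-\bar z$; its jump $J(z,k)=-\int_{\CC_a}W(z',k)$ lives precisely on the segments $(z,z_r)$ and $(-\bar z_r,-\bar z)$, with no jump on $(z_l,z)$. After the normalization $\tilde\phi(z,k)=\big((k-z)(k+\bar z)\big)^m\phi(z,k)$ the singularities at $z$ and $-\bar z$ disappear; the remaining poles of $\tilde\phi$ sit at $z_r$ and $-\bar z_r$, and $a_r$ is by definition the residue of $\tilde\phi$ at $k=z_r$, not at $z$ or $-\bar z$. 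The symmetry $\tilde\phi(z,-\bar k)=-\overline{\tilde\phi(z,k)}$ gives $a_{-r}=\bar a_r$, whence $a_r+a_{-r}=2\Re(a_r)$. The explicit dependence of $a_r$ on tangential derivatives of $u_t,u_n$ up to order $m-1$ comes from $m$ integrations by parts in the defining integral for $\phi$ near $z_r$, not from a Taylor expansion at $k=z$.

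Finally, $u(z)-u(z_r)$ is not recovered from any finite residue: it is the $k^{-1}$--coefficient of $\tilde\phi(z,k)$ at infinity, since $\phi(z,k)\sim k^{-2m-1}(u(z)-u(z_r))$ there. One writes the Plemelj representation of $\tilde\phi-\tilde\phi_{z_r,-\bar z_r}$ over the two jump segments and equates $k^{-1}$--coefficients at infinity; the segment integral $\int_{(z,z_r)}$ in the final formula appears because that segment is the support of the jump, not because a closed contour encircles a pole. Your proposed route through ``the residue at $k=z$ inside the contour $(z,z_r)$'' does not work as stated: $(z,z_r)$ is not a closed contour, and after normalization there is no singularity of $\tilde\phi$ at $z$ in any case.
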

\begin{remark}
In the particular case $\alpha=m=0$, equation (\ref{GASPT}) is Laplace equation and the solutions $u$ are simply the functions harmonic in $\DD_{a}$. In this case, the expression in the right-hand side of (\ref{u-expl}) simplifies to
\begin{align*}
& \frac{1}{\pi}\Im\int_{(z,z_{r})}\int_{\CC_{a}}W(z',k)dk+u(z_{r})
= \frac{1}{\pi}\Im\int_{(z,z_{r})}\int_{\CC_{a}}\left(\frac{u_{z}(z)}{k-z}dz+\frac{u_{\bar z}(z)}{k+\bar z}d\bar z\right)dk+u(z_{r})\\
& = \frac{1}{\pi}\Im\int_{(z,z_{r})}-2i\pi u_{z}(k)dk+u(z_{r})=
-\int_{(z,z_{r})}u_{x}(x)dx+u(z_{r})
\end{align*}
which is indeed $u(z)$. Note that in the second equality we have applied Cauchy formula to the analytic function $u_{z}(z)$.
\end{remark}
\begin{theorem}\label{expl-odd}
Let $u$ be a solution of the equation 
$$\Delta u+\alpha x^{-1}\partial_{x}u=0,\quad \alpha=-2m+1,\quad m\in\N,$$ 
in the domain $\DD_{a}$ with smooth tangential and (outer) normal derivatives $u_{t}$ and $u_{n}$ on the boundary $\CC_{a}$. Then $u$ admits the integral representation
\begin{equation}\label{v-expl}
u(z)=-\frac{1}{2\pi}\Im\int_{\CC_{a}}\frac{\big((k-z_{r})(k+\bar{z}_{r})\big)^{m}J(z,k)}{
\sqrt{(k-z)(k+\bar z)}}dk+u(z_{r}),\qquad z\in \DD_{a},
\end{equation}
where integration is on the circle $\CC_{a}$, oriented counter-clockwise. The square root in the denominator has a branch cut along the segment $(-\bar z,z)$. 
Integration starts at $z_{r}$ where the square root is taken to be positive, and its determination is chosen so that it remains continuous along the path of integration. The function $J(z,k)$ is explicitly given in terms of the tangential and normal derivatives $u_{t}$ and $u_{n}$ and their derivatives of order up to $m-1$ along the boundary $\CC_{a}$. It can be written as a sum,
\begin{equation}\label{expr-D}
J(z,k)=J^{0}(z_{r},k)+\int\tilde W(z',k),\qquad k\in\CC_{a},
%-2\int\frac{(k-iy')u_{t}+ix'u_{n}}{\big((k-z')(k+\bar{z'})\big)^{-m+1/2}}ds,
\end{equation}
see (\ref{def-J0})--(\ref{D23}) for a precise definition of $J(z,k)$.
%where $z'(s)=x'(s)+iy'(s)$ and t
The path of integration in the above integral is the subarc from $z_{r}$ to $k$ on $\CC_{a}$. It lies in $\{\Im z\geq\Im z_{r}\}$ when 
$\Im k\geq\Im z_{r}$ and in $\{\Im z\leq\Im z_{r}\}$ when $\Im k\leq\Im z_{r}$.
% when $k$ belongs to the right half of $\CC_{a}$, and the union of arcs $(z_{r},\widetilde k)\cup(k,\widetilde k)$ when $k$ belongs to the left half of $\CC_{a}$. 
The definitions of $J^{0}$ and the differential form $\tilde W$ involve the square root 
$$\lambda(z',k)=\sqrt{(k-z')(k+\bar z')},$$
with a branch cut along the segment $(-\bar z',z')$. We choose the determination of $\lambda(z_{r},k)$ that behaves like $k$ (resp. $-k$) at infinity when $\Im k\geq\Im z_{r}$ (resp. $\Im k\leq\Im z_{r}$) and then keep a continuous determination of $\lambda(z',k)$ when $z'$ moves along the path of integration. 
\end{theorem}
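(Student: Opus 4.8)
The plan is to run the unified transform (Fokas) method exactly as for Theorem~\ref{expl-even}, the only structural novelty being that the relevant eigenfunction now lives on a two-sheeted Riemann surface because $\alpha$ is odd. First I would exhibit the Lax pair. Writing $z=x+iy$ and using $x=(z+\bar z)/2$, $\Delta=4\partial_z\partial_{\bar z}$, the equation with $\alpha=-2m+1$ becomes $(z+\bar z)u_{z\bar z}=(m-\tfrac12)(u_z+u_{\bar z})$, and a direct computation shows that, for a spectral parameter $k$, the $1$-form
$$\tilde W(z,k)=\lambda(z,k)^{-2m-1}\bigl((k+\bar z)u_z\,dz+(k-z)u_{\bar z}\,d\bar z\bigr),\qquad\lambda(z,k)=\sqrt{(k-z)(k+\bar z)},$$
is closed in $z$ precisely when $u$ solves the equation. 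Since $\lambda$ carries a square root, $\tilde W$ is single-valued only on the double cover of the $k$-plane branched at $k=z$ and $k=-\bar z$, with cut the segment $(-\bar z,z)$; this is the surface featuring in the statement, and $\tilde W$ is the odd-$\alpha$ analogue of the form $W$ of Theorem~\ref{expl-even}.

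Next I would introduce the eigenfunction by integrating this closed form from the base point $z_r$,
$$\phi(z,k)=\int_{z_r}^{z}\tilde W(\cdot,k),$$
along a fixed path in $\DD_a$, so that $\phi(z,z_r)=0$. I would then read off the $k$-dependence of $\phi$ on the Riemann surface: it is sectionally analytic, with a jump whose support comes both from the multivaluedness of $\lambda$ across $(-\bar z,z)$ and from the singularity of $\tilde W$ at $z'=k$ on the path of integration; the jump is then expressed through the boundary data by the global relation — the vanishing of the integral of $\tilde W$ over $\CC_a$ joined to the integration path — which is where the boundary integral $\int_{\CC_a}$ in $J(z,k)$ comes from. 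At the branch points $k=z$ and $k=-\bar z$ the factor $\lambda^{-2m-1}$ produces, for $m\ge1$, a singularity of order $m$; taming it requires integration by parts along $\CC_a$, transferring up to $m-1$ derivatives onto $u_t$ and $u_n$ — this is exactly the manipulation that turns the naive integrand into the explicit $J(z,k)=J^0(z_r,k)+\int\tilde W(\cdot,k)$ of (\ref{expr-D}) and that forces the branch prescriptions for $\lambda(z',k)$ stated in the theorem. Finally, the asymptotics $\lambda(z,k)\sim\pm k$, $\tilde W\sim k^{-2m}\,du$ as $k\to\infty$ show that $\lambda(z_r,k)^{2m}\phi(z,k)=\bigl((k-z_r)(k+\bar z_r)\bigr)^{m}\phi(z,k)$ tends to $u(z)-u(z_r)$, which explains the numerator weight in (\ref{v-expl}).

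With these ingredients, $\phi(z,\cdot)$ is characterised on the Riemann surface by a scalar Riemann--Hilbert problem — sectional analyticity with the computed jump, the normalisation $\phi(z,z_r)=0$, and the prescribed behaviour at $k=\infty$ and at the two branch points — which I would solve explicitly by a Cauchy-type integral over the cut against the kernel $1/\lambda(z,k)$. Substituting the explicit jump and then deforming the contour off the cut $(-\bar z,z)$ out to the circle $\CC_a$ — legitimate because, on the relevant sheet, the resulting integrand continues analytically through the region between the two curves, so that no residue is crossed — together with the identification $u(z)-u(z_r)=\lim_{k\to\infty}\lambda(z_r,k)^{2m}\phi(z,k)$ and taking imaginary parts (valid since $u$ is real-valued), produces the representation (\ref{v-expl}).

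The hard part will be the bookkeeping of the branch of $\lambda(z',k)$ — its continuous determination both as $z'$ moves along paths inside $\DD_a$ and as $k$ moves on the two sheets — since this governs every sign, orientation and branch choice in the statement, and the integration-by-parts reduction of the order-$m$ singularity at $k=z$ and $k=-\bar z$, which yields the somewhat involved formula for $J(z,k)$ and has to be arranged so that the subsequent deformation of the cut onto $\CC_a$ does not cross any new singularity. The case $m=0$ (that is, $\alpha=1$) should serve as a consistency check, just as the Laplace case did in the remark following Theorem~\ref{expl-even}.
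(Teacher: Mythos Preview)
Your overall architecture is right --- Lax pair, closed form $W$, eigenfunction $\phi$, scalar RH problem on the two-sheeted surface, extraction of $u(z)-u(z_r)$ from the $k\to\infty$ behaviour, and the normalising factor $((k-z_r)(k+\bar z_r))^m$ --- but the mechanism by which the jump contour and jump data arise is not the one in the paper, and your version has a real gap.

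You take a single fixed path from $z_r$ to $z$, expect the jump of $\phi(z,\cdot)$ to sit on the branch cut $(-\bar z,z)$, to be computed via the global relation as an integral over all of $\CC_a$, and then to be deformed out to $\CC_a$. In the paper the jump never lives on the cut. One defines $\phi(z,k)$ using \emph{two different} paths $\gamma_1,\gamma_2$ from $z_r$ to $z$ (via $\CC_{up}$ or $\CC_{low}$ to $z_l$, then the segment $(z_l,z)$), the choice depending on whether $k$ lies on the upper or lower sheet of $\Scal_z$. With this definition one checks directly that $\phi$ has \emph{no} jump across $(-\bar z,z)$ --- the contributions from the two sheets match --- and the actual jump lies on $\CC_a\cup(-\bar\CC_a)$ regarded as an open contour on $\Scal_z$ passing through the cut at $z_l$ and $-\bar z_l$. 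The jump there is not $\int_{\CC_a}W$ (that is the even-$\alpha$ picture) but the subarc expressions (\ref{D11})--(\ref{D23}), obtained by comparing the two path integrals when $k$ sits on the circle; this is precisely why $J(z,k)$ in the statement is an integral from $z_r$ to $k$ along a subarc, not over the whole circle.

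Consequently your deformation ``from the cut to $\CC_a$'' is the wrong picture, and with a single fixed path the analytic structure of $\phi$ in $k$ is harder to control: as $z'$ runs along the path the moving cut $(-\bar z',z')$ of $\lambda(z',k)$ sweeps across $k$, and the resulting discontinuities do not assemble into a jump matching (\ref{expr-D}). The integration-by-parts you describe is indeed how (\ref{exp-phi}) and the terms $J^0,\tilde W$ arise, but its role is to make the jump along $\CC_a$ bounded at the endpoints $z_r,-\bar z_r$ and to exhibit the order-$m$ polar part of $\tilde\phi$ at $z,-\bar z$, which one subtracts before writing the Plemelj-type representation (\ref{plemelj}); it is not a device for regularising a jump on the cut. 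One further correction: the RH problem is pinned down not by ``$\phi(z,z_r)=0$'' but by the antisymmetry $\tilde\phi(z,\infty_1)=-\tilde\phi(z,\infty_2)$ together with boundedness at the four contour endpoints $z_{r,1},z_{r,2},-\bar z_{r,1},-\bar z_{r,2}$; uniqueness then follows because a jump-free function on the compact surface $\bar\Scal_z$ is constant.
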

\begin{remark}
Making use of the symmetry principles (\ref{recur-eq}) and (\ref{sym-eq}), one deduces easily from Theorems \ref{expl-even} and \ref{expl-odd} similar integral representations for the solutions of (\ref{Ernst}), that is in the case of a positive integer coefficient $\alpha$.
\end{remark}
The second result concerns the correspondance between the Dirichlet and Neumann data $u_{t}$ and $u_{n}$. Having a Lax pair is equivalent to the existence of a closed differential form. By Poincar\'e lemma, this leads to the vanishing of an integral on a closed contour, the so-called global relation. In the present case, it will follow from results in Section \ref{Lax-pair} that
\begin{equation}\label{rel-glob1}
\int_{\CC_{a}}[(k-z)(k+\bar z)]^{\alpha/2-1}
((k-iy)u_{t}(z)+ixu_{n}(z))ds=0,\qquad k\in\C\setminus(\DD_{a}\cup\DD_{-a}).
\end{equation}
It is sometimes conjectured that there always exists at least one global relation (i.e. a Lax pair) that allows for the recovering of one type of boundary values from the other one. In the case of an even coefficient $\alpha$, we show that, indeed, this correspondance can be performed explicitly from such a relation.
\begin{theorem}\label{corresp}
Assume $\alpha=-2(m-1)$, $m\in\N^{*}$, and $u_{t}$ is a given function in $L^{2}(\CC_{a})$. 
%Then the normal derivative $u_{n}$ in $L^{2}(\CC_{a})$ of the solution $u$ of the Dirichlet problem
Let $u_{n}$ be a function in $L^{2}(\CC_{a})$ such that the global relation (\ref{rel-glob1}) holds true.
%$$\Delta u-2\frac{(m-1)}{x}\partial_{x}u=0,\qquad u=u_{t} \text{ on }\CC_{a},$$
%can be explicitly recovered from the global relation
%\begin{equation}\label{eq-inject}
%\int_{\CC_{a}}\frac{xu_{n}(z)}{(k-z)^{m}(k+\bar z)^{m}}ds=
%\int_{\CC_{a}}\frac{(y+ik)u_{t}(z)}{(k-z)^{m}(k+\bar z)^{m}}ds,\qquad k\in\C\setminus(\DD_{a}\cup\DD_{-a}).
%\end{equation}
Then, $u_{n}$ is unique and can be explicitly recovered from that relation. A similar statement holds when $\alpha=2(m+1)$, $m\in\N$. In that case, (\ref{rel-glob1}) does not allow the reconstruction of $u_{n}$. A relation that works is obtained by integrating another differential form, see (\ref{defW2}).
%from (\ref{eq-inject}). 
\end{theorem}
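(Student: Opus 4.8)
The plan is to treat the global relation (\ref{rel-glob1}) as an integral equation for the unknown $u_{n}$ and to analyze it by expanding the kernel $[(k-z)(k+\bar z)]^{\alpha/2-1}$, which, when $\alpha=-2(m-1)$, equals $[(k-z)(k+\bar z)]^{-m}$, i.e. a rational function of $k$ with poles at $k=z$ and $k=-\bar z$ of order $m$. Fix $z=x+iy\in\CC_{a}$ and write the circle as $z(s)=a+e^{is}$, $s\in[0,2\pi)$; the left-hand side of (\ref{rel-glob1}) is then an analytic function of $k$ on $\C\setminus(\overline{\DD_{a}}\cup\overline{\DD_{-a}})$, and requiring it to vanish identically there is equivalent to requiring that all of its singular parts at the poles inside $\DD_{a}$ (coming from the factor $(k-z)^{-m}$ as $z$ ranges over $\CC_{a}$) cancel. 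First I would isolate, via the partial-fraction decomposition of $[(k-z)(k+\bar z)]^{-m}$ with respect to $k$, the principal parts at $k=z$: their coefficients are linear combinations of $u_{t}$, $u_{n}$ and their tangential derivatives up to order $m-1$ at the point $z$, and the vanishing of the full integral forces a first-order-in-$u_n$ relation (a singular integral/convolution-type equation on $\CC_{a}$) together with finitely many compatibility conditions. Second, I would recognize that modulo the lower-order derivative terms this relation is, up to multiplicative constants, the statement that a certain Cauchy-type transform of $xu_{n}\,ds$ equals a Cauchy-type transform of $(k-iy)u_{t}\,ds$; inverting the Cauchy/Hilbert transform on the circle (Plemelj, together with the fact that $x=\Re z>0$ on $\CC_{a}$ so one does not divide by zero) yields $u_{n}$ explicitly in terms of $u_{t}$ and finitely many tangential derivatives, and in particular shows uniqueness in $L^{2}(\CC_{a})$: the difference of two solutions would have vanishing Cauchy transform outside both disks and hence, by analytic continuation across $\CC_{a}$ and the usual jump relations, be zero.

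For the reconstruction itself I would proceed by induction on $m$. The base case $m=1$ ($\alpha=0$) is the harmonic case, where (\ref{rel-glob1}) reduces to $\int_{\CC_{a}}((k-iy)u_{t}+ixu_{n})/((k-z)(k+\bar z))\,ds=0$ and, after the remark's computation, is exactly the statement that the holomorphic Cauchy data $u_{z}$ extends analytically outside $\overline{\DD_a}$; solving gives the classical conjugate-function formula for $u_{n}$ from $u_{t}$. For the inductive step I would use the recurrence (\ref{recur-eq}): if $u$ solves $(E_{-2(m-1)})$ then $w:=x^{-1}\partial_x u$ solves $(E_{-2(m-2)})$, and the boundary data of $w$ are explicit differential expressions in $u_t,u_n$ and one more tangential derivative; applying the already-established case $m-1$ to $w$ recovers the normal data of $w$ from its tangential data, which unwinds to one more linear relation among $u_t,u_n$ and their derivatives. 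Iterating $m-1$ times pins down $u_{n}$ completely. The constant $a_r$ and the finitely many "initial data at $z_r$" that appear in Theorems \ref{expl-even}–\ref{expl-odd} are exactly the integration constants produced at each stage of this induction.

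For the case $\alpha=2(m+1)$, $m\in\N$, I would invoke the symmetry principle (\ref{sym-eq}): $u$ solves $(E_{2(m+1)})$ iff $x^{\alpha-1}u=x^{2m+1}u$ solves $(E_{-2m})$, reducing the reconstruction to the previously treated even negative case — but note that multiplication by $x^{2m+1}$ changes the boundary data in a way that is invertible (again $x>0$ on $\CC_a$), so $u_n$ is still uniquely determined, yet the relation one must integrate is the transformed one, which is precisely the differential form (\ref{defW2}) rather than the $W$ behind (\ref{rel-glob1}); one then checks directly that the untransformed relation (\ref{rel-glob1}) with $\alpha=2(m+1)$ has a nontrivial kernel (for instance the data of $x^{-(2m+1)}\times(\text{a solution vanishing to high order at the relevant points})$ lie in it), which is why it fails to reconstruct $u_n$. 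I expect the main obstacle to be the bookkeeping in the inductive step: controlling exactly which tangential derivatives of $u_t$ and $u_n$ enter at each order, verifying that the singular integral operator to be inverted is the (invertible) Cauchy/Hilbert operator and not something with a kernel, and tracking the integration constants so that they match the quantities $a_r$, $J^0(z_r,k)$ appearing in the statements of Theorems \ref{expl-even} and \ref{expl-odd}. The analytic core — invertibility of the Cauchy transform on $\CC_a$ and the Plemelj jump relations — is standard; the work is in the algebra of the partial-fraction expansion and the recurrence/symmetry substitutions.
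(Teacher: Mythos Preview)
Your approach differs substantially from the paper's, and the inductive step has a real gap.

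The paper does not use the recurrence (\ref{recur-eq}) or any induction on $m$. Instead it translates the global relation to the unit circle $\T$, introduces the spectral variable $\mu=-1/(k+a)$ together with the involution $\varphi(\mu)=-\mu/(1+2a\mu)$, and writes the unknown $f(z)=(x+a)u_n(z+a)$ on $\T$ as $g_1(z)+\bar g_1(1/z)$ with $g_1$ holomorphic in $\D$. The global relation becomes $\Phi_1(\mu)-\bar\Phi_1(\varphi(\mu))=\text{(known)}$, where $\Phi_1$ is a Cauchy-type integral of $g_1$. Multiplying by a rational factor that separates the pieces analytic in $\D$ and in $\C\setminus\D$ shows that $h_1(z)=z^{m-1}g_1(z)$ satisfies an explicit linear ODE of order $m-1$ with a regular singular point at $z_1=-a+\sqrt{a^2-1}$, whose right-hand side is determined only up to an unknown polynomial of degree $\le 2m-2$. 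A separate combinatorial lemma proves this ODE has a \emph{unique} solution analytic at $z_1$, and that polynomial right-hand sides produce polynomial solutions; the residual polynomial ambiguity is then eliminated by re-inserting into the global relation. The invertibility is thus not that of a Cauchy/Hilbert transform but of an ODE at a regular singular point.

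Your induction breaks at the sentence ``applying the already-established case $m-1$ to $w$ recovers the normal data of $w$ from its tangential data.'' With $w=x^{-1}\partial_x u$, the restriction $w|_{\CC_a}$, and hence both $w_t$ and $w_n$, are linear combinations of $u_t$, $u_n$ and their tangential derivatives, since $\partial_x$ mixes $\partial_t$ and $\partial_n$ on $\CC_a$. So $w_t$ is \emph{not} known: it already contains the unknown $u_n$. The inductive hypothesis therefore yields only an identity among second-order boundary expressions in $u_t,u_n$, not a formula for $u_n$; iterating produces higher-order identities, but you give no argument that this system is equivalent to (\ref{rel-glob1}) for the original $\alpha$, nor that it can be solved for $u_n$. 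The discussion at the end of Section~\ref{Lax-pair} (failure of the same Lax pair on $\D$) shows that such invertibility is genuinely geometry-dependent and cannot be taken for granted. A related issue affects your first paragraph: after integrating over $z\in\CC_a$, the function of $k$ is analytic off $\CC_a\cup(-\CC_a)$ in each component, with no isolated poles inside $\DD_a$, so there are no ``principal parts at poles inside $\DD_a$'' to cancel; the vanishing on the exterior component is a moment condition, not a residue condition.

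For $\alpha=2(m+1)$ your idea is essentially the paper's: the form (\ref{defW2}) is precisely what one obtains by applying (\ref{sym-eq}) and then the first Lax pair. The paper's reason that (\ref{rel-glob1}) itself fails here is more concrete than exhibiting a kernel element: with this $\alpha$ the integrand reduces to a Laurent polynomial in $z$ (after moving to $\T$), so the relation involves only the first $m+1$ Taylor coefficients of $g_1$ and cannot determine $u_n$.
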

\begin{remark}
Making use of the equation conjugate to (\ref{GASPT}), that is changing $\alpha$ into $-\alpha$, we derive the converse reconstruction of the Dirichlet data $u_{t}$ from the Neumann data $u_{n}$, still when $\alpha\in 2\Z$ is an even coefficient. In Theorem \ref{corresp}, it is actually not necessary to assume smoothness of the functions $u_{t}$ and $u_{n}$ on the boundary circle $\CC_{a}$. Hence, we only assume these functions to be $L^{2}$ on $\CC_{a}$.
\end{remark}
Apparently, the reconstruction of $u_{n}$ from (\ref{rel-glob1}), or a similar relation, is not completely straightforward. It may be conjectured that the reconstruction of $u_{n}$ should be possible by an integral transform, like one of the Abel type, see e.g. \cite{TV}.
We were not able to find such an integral transform in the present case.
Our method uses rather the symmetry involved in the problem and the property of a particular linear differential equation stemming from the computation of a contour integral by Cauchy formula.
%solving a Dirichlet problem (for the usual Laplacian), and two linear systems of equations whose matrices are given by wronskians of orthogonal polynomials. 
%In particular, we were not able to find an integral transform that inverts the left-hand side of (\ref{eq-inject}).

In Section \ref{Lax-pair} we recall the notion of Lax pairs and compute such pairs for the equation (\ref{GASPT}). We also briefly discuss the use of a Lax pair, or the related global relation, in deriving an explicit correspondance between Dirichlet and Neumann data in the simple case of the Laplace equation. The study of two specific Riemann-Hilbert problems, leading to the proofs of Theorems \ref{expl-even} and \ref{expl-odd}, is performed in Sections \ref{RH}. The proof of Theorem \ref{corresp} is displayed in Section \ref{global}. 
%IN THE LAST SECTION
\section{Lax pairs and closed differential forms}\label{Lax-pair}
A Lax pair for a partial differential equation $P(u)=0$ is a pair of ordinary differential equations, for a function $\phi$ related to $u$, which are compatible precisely when $P(u)=0$. % if and only if $u$ satisfies (\ref{GASPT}).
The general equation (\ref{GASPT}) admits such a Lax pair.
%, that is a pair of differential equations, for a single function $\phi$, which are compatible if and only if $u$ satisfies (\ref{GASPT}). 
Indeed, writing (\ref{GASPT}) with respect to the complex variables $z$ and $\bar{z}$, we get
\begin{equation}\label{CCGS}
u_{z\bar{z}}+\frac{\alpha}{2(z+\bar{z})}(u_{z}+u_{\bar{z}})=0,
\end{equation}
and a possible way to find a Lax pair is by rewriting (\ref{CCGS}) in the form
\begin{equation}\label{LP1}
(f(z,\bar z)u_{\bar z})_{z}+(g(z,\bar z)u_{z})_{\bar z}=0,
\end{equation}
where the functions $f$ and $g$ have to be determined. Expanding (\ref{LP1}) and comparing with (\ref{CCGS}), we get
\begin{equation}\label{LP2}
f_{z}=\frac{\alpha}{2(z+\bar z)}(f+g),\qquad g_{\bar z}=\frac{\alpha}{2(z+\bar z)}(f+g).
\end{equation}
Differentiating the first equation with respect to $\bar z$, the second one with respect to $z$, and adding, we obtain
$$
(f+g)_{z\bar z}=\frac{\alpha}{2(z+\bar z)}((f+g)_{z}+(f+g)_{\bar z})
-\frac{\alpha}{(z+\bar z)^{2}}(f+g),
$$
so that $f+g$ satisfies the adjoint equation of (\ref{CCGS}) (see e.g. \cite[Chapter 7]{FOL} for the notion of the adjoint equation). We seek solutions in the form 
$$(f+g)(z,\bar z)=(z+\bar z)A(z)B(\bar z).$$
Plugging that in the previous equation leads to
$$(z+\bar z)A_{z}B_{\bar z}=(\alpha/2-1)(AB_{\bar z}+A_{z}B),$$
and thus
$$
-(\alpha/2-1)A/A_{z}+z=k=(\alpha/2-1)B/B_{\bar z}-\bar z,
$$
where $k$ may be any complex number. It is a new, additional parameter in the problem, the so-called spectral parameter.
Solving for the two equations, we get, as possible solutions,
$$A(z)=(k-z)^{\alpha/2-1},\qquad B(\bar z)=-(k+\bar z)^{\alpha/2-1}.$$
In view of (\ref{LP2}), we may thus choose for $f$ and $g$,
$$
f(z,\bar z)=(k-z)^{\alpha/2}(k+\bar z)^{\alpha/2-1},\qquad
g(z,\bar z)=-(k-z)^{\alpha/2-1}(k+\bar z)^{\alpha/2}.
$$
Hence, (\ref{CCGS}) is equivalent to
$$
\big((k+\bar{z})^{\alpha/2-1}(k-z)^{\alpha/2}u_{\bar{z}}\big)_{{z}}-
\big((k+\bar{z})^{\alpha/2}(k-z)^{\alpha/2-1}u_{z}\big)_{\bar{z}}
=0.
$$
%where $k\in\C$ is an additional parameter, the so-called spectral parameter. 
%One may then consider a potential $M(z,\bar{z},k)$ such that
%$$M_{z}=(\bar{z}+k)^{\alpha}(z-k)^{\alpha-1}u_{z},\qquad
%M_{\bar{z}}=-(\bar{z}+k)^{\alpha-1}(z-k)^{\alpha}u_{\bar{z}}.$$
%Writing $M$ in the form $M(z,\bar{z},k)=(\bar{z}+k)^{\alpha}(z-k)^{\alpha}\mu$, the previous equations become
%$$\alpha\mu+(z-k)\mu_{z}=u_{z},\qquad
%\alpha\mu+(\bar{z}+k)\mu_{z}=-u_{\bar{z}}.$$
This last equation is equivalent to the compatibility of the two ordinary differential equations
\begin{equation}\label{Lax}
\phi_{z}(z,k)=(k+\bar{z})^{\alpha/2}(k-z)^{\alpha/2-1}u_{z}(z),\qquad
\phi_{\bar z}(z,k)= (k+\bar{z})^{\alpha/2-1}(k-z)^{\alpha/2}u_{\bar{z}}(z),
\end{equation}
which thus gives a Lax pair for (\ref{CCGS}).
%with 
%$$\lambda(z,k)=\sqrt{\frac{k-z}{k+\bar{z}}}.$$
%\\[\baselineskip]
%CAS $\alpha=0$
%\\[\baselineskip]
Equivalently, we may express the property of the Lax pair as the closedness of a differential form.
\begin{proposition}\label{Prop-W}
The function $u$ satisfies (\ref{CCGS}) in a bounded, simply connected domain $\Omega$ of $\H$ if and only if
%, for any $k\in\C$ such that $k$ and $-\bar k$ do not belong to $\Omega$, 
the differential form
\begin{equation}\label{defW}
z\mapsto W(z,k)=\big[(k-z)(k+\bar{z})\big]^{\alpha/2-1}\big[(k+\bar{z})u_{z}(z)dz+(k-z)u_{\bar{z}}(z)d\bar{z}\big]
\end{equation}
is closed in $\Omega$. Note that, when $\alpha\in 2\N^{*}$, the differential form has no singularities in $\Omega$ and $k$ can be any complex number. Otherwise, for $\alpha\in\R\setminus 2\N^{*}$, $W(z,k)$ has either a pole or a branch point at $k$ and
 $-\bar k$, so that $k$ should lie outside $\Omega$ and $-\bar\Omega$.
 % if one of these points belongs to $\Omega$.
\end{proposition}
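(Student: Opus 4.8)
The plan is to check directly that the exterior derivative $dW$ vanishes exactly when $u$ solves (\ref{CCGS}). Write $W(z,k)=P\,dz+Q\,d\bar z$ with
$$P=(k-z)^{\alpha/2-1}(k+\bar z)^{\alpha/2}u_{z},\qquad Q=(k-z)^{\alpha/2}(k+\bar z)^{\alpha/2-1}u_{\bar z},$$
which is precisely the pair appearing in the Lax pair (\ref{Lax}), namely $P=\phi_{z}$ and $Q=\phi_{\bar z}$; in particular $dW=0$ locally is the same as the compatibility of the two equations in (\ref{Lax}). Since $d(P\,dz+Q\,d\bar z)=(\partial_{z}Q-\partial_{\bar z}P)\,dz\wedge d\bar z$, closedness of $W$ on $\Omega$ reduces to the single scalar identity $\partial_{\bar z}P=\partial_{z}Q$ there. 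Here one should assume $W$ is a $C^{1}$ form, i.e. $u\in C^{2}$; by ellipticity of (\ref{GASPT}) this forces $u\in C^{\infty}$, as recalled in the introduction.

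Applying the product rule, one finds
$$\partial_{\bar z}P=(k-z)^{\alpha/2-1}(k+\bar z)^{\alpha/2-1}\Big[\tfrac{\alpha}{2}u_{z}+(k+\bar z)u_{z\bar z}\Big],\qquad \partial_{z}Q=(k-z)^{\alpha/2-1}(k+\bar z)^{\alpha/2-1}\Big[-\tfrac{\alpha}{2}u_{\bar z}+(k-z)u_{z\bar z}\Big].$$
Subtracting and using $(k-z)-(k+\bar z)=-(z+\bar z)$, the common factor $(k-z)^{\alpha/2-1}(k+\bar z)^{\alpha/2-1}$ pulls out and the remaining bracket is $-\tfrac{\alpha}{2}(u_{z}+u_{\bar z})-(z+\bar z)u_{z\bar z}$, that is $-(z+\bar z)$ times the left-hand side of (\ref{CCGS}). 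As $z+\bar z=2\,\Re z>0$ on $\H$ and the factor $(k-z)^{\alpha/2-1}(k+\bar z)^{\alpha/2-1}$ is nowhere zero on $\Omega$ (granted the stated position of $k$), we conclude that $dW\equiv0$ on $\Omega$ if and only if $u$ satisfies (\ref{CCGS}) on $\Omega$, which is the assertion.

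Finally, the parenthetical remarks on $k$ are seen as follows. If $\alpha\in 2\N^{*}$ the exponents $\alpha/2$ and $\alpha/2-1$ are non-negative integers, $P$ and $Q$ are polynomial in $k-z$ and $k+\bar z$, there is no singularity, and $k$ is unrestricted. If $\alpha\in\R\setminus 2\N^{*}$, the factor $(k-z)^{\alpha/2-1}$ (resp. $(k+\bar z)^{\alpha/2-1}$) has a pole or branch point where $z=k$ (resp. $\bar z=-k$), so for $W$ to be a single-valued form without singularities in $\Omega$ one needs $k\notin\Omega$ and $k\notin-\bar\Omega$; then, $\Omega$ being simply connected, the images $\{k-z:z\in\Omega\}$ and $\{k+\bar z:z\in\Omega\}$ are simply connected domains avoiding $0$, on which single-valued branches of the relevant powers exist, and the computation above is legitimate. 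I expect the differentiation itself to be entirely routine; the only points that need a little care are the consistent choice of branches and the non-vanishing, on all of $\Omega$, of the prefactor that is divided out — both ensured precisely by the hypothesis on $k$.
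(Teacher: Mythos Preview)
Your proof is correct and is essentially the same computation the paper carries out in the paragraphs immediately preceding the proposition: the paper derives $f,g$ so that (\ref{CCGS}) becomes $(fu_{\bar z})_{z}+(gu_{z})_{\bar z}=0$, which is exactly your identity $\partial_{\bar z}P=\partial_{z}Q$ read backwards. One tiny quibble: when $\alpha\in 2\N^{*}$ with $\alpha\geq 4$ and $k$ is unrestricted, the prefactor $(k-z)^{\alpha/2-1}(k+\bar z)^{\alpha/2-1}$ can vanish at $z=k\in\Omega$, so ``nowhere zero on $\Omega$'' is not quite right there; but the zero set is a single point and continuity of the bracketed expression handles the ``only if'' direction, so the argument stands.
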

Making use of the symmetry relation (\ref{sym-eq}), we derive a second Lax pair, namely
\begin{align*}%\label{Lax2}
\phi_{z}(z,k) & =(k+\bar{z})^{1-\alpha/2}(k-z)^{-\alpha/2}((z+\bar z)^{\alpha-1}u)_{z}(z)\\
& = (k+\bar{z})^{1-\alpha/2}(k-z)^{-\alpha/2}(z+\bar z)^{\alpha-2}
[(z+\bar z)u_{z}(z)+(\alpha-1)u(z)],
\\
\phi_{\bar z}(z,k) & = (k+\bar{z})^{-\alpha/2}(k-z)^{1-\alpha/2}((z+\bar z)^{\alpha-1}u)_{\bar{z}}(z)
\\
& = (k+\bar{z})^{-\alpha/2}(k-z)^{1-\alpha/2}(z+\bar z)^{\alpha-2}
[(z+\bar z)u_{\bar z}(z)+(\alpha-1)u(z)],
\end{align*}
and the related closed form
\begin{multline}\label{defW2}
z\mapsto W(z,k) 
=\big[(k-z)(k+\bar{z})\big]^{-\alpha/2}x^{\alpha-2}\\ 
\cdot\big[(k+\bar{z})(2xu_{z}(z)+(\alpha-1)u(z))dz+(k-z)(2xu_{\bar{z}}(z)+(\alpha-1)u(z))d\bar{z}\big].
\end{multline}
This second differential form will be useful in Section \ref{global}.

We end this section with a brief discussion on the possible use of a Lax pair for the derivation of an explicit correspondance between different types of boundary data. It is easy to check that not all Lax pairs can achieve this goal. For instance, if we consider (\ref{Lax}) when $m=0$, we get
$$\phi_{z}(z,k)=u_{z}(z)/(k-z),\qquad
\phi_{\bar z}(z,k)= u_{\bar{z}}(z)/(k+\bar{z}),$$
which is a Lax pair for the Laplace equation $\Delta u=0$. If we choose as a domain $\Omega$ the unit disk $\D$, the differential form $W(z,k)$ from Proposition \ref{Prop-W} is closed in $\D$ and thus we get, in terms of the tangent and normal derivatives $u_{t}$ and $u_{n}$ on the unit circle,
\begin{equation}\label{eq-in-D}
\int_{\partial\D}\frac{xu_{n}(z)}{(k-z)(k+\bar z)}ds=
\int_{\partial\D}\frac{(y+ik)u_{t}(z)}{(k-z)(k+\bar z)}ds,\qquad k\in\C\setminus\D.
\end{equation}
Assume that the tangent derivative $u_{t}$ is known, hence also the integral in the right-hand side, equal to some function $\psi(k)$. Decomposing the real function $xu_{n}(z)=g(z)+\bar g(1/z)$, with $g(z)$ analytic in $\D$, $g(0)\in\R$, it is readily checked that (\ref{eq-in-D}) rewrites as 
$$\bar g(1/k)-g(-1/k)=k\psi(k),$$
which allows one to recover only the half of $g$, namely the imaginary parts of its even Taylor coefficients and the real parts of the odd ones.

On the contrary, when considering the domain $\DD_{a}=D(a,1)$ instead of $\D$, we will see in Section \ref{global} that the Lax pair in (\ref{Lax}) is sufficient to recover the Dirichlet to Neumann correspondance.
%%%%%%%%%%%%%%%%
\section{Characterization by a Riemann--Hilbert problem}\label{RH}
The goal of this section is to prove Theorems \ref{expl-even} and \ref{expl-odd}. \subsection{Case of a negative even integer coefficient $\alpha=-2m$, $m\in\N$}
\begin{proof}[Proof of Theorem \ref{expl-even}]
The differential form $W(z,k)$ rewrites as
\begin{equation}\label{defW-pair}
W(z,k)=\big((k-z)(k+\bar{z})\big)^{-m-1}\big((k+\bar z)u_{z}(z)dz+
(k-z)u_{\bar{z}}(z)d\bar{z}\big).
\end{equation}
Equations (\ref{Lax}) can be written as $d\phi=W$ and we thus construct a function $\phi$ of the form
\begin{equation}\label{def-phi-even}
\phi(z,k)=\int_{z_{r}}^{z} W(z',k),
\end{equation}
where the path of integration needs to be defined. For $k$ such that $\Im k\geq\Im z$, we integrate from $z_{r}$ to $z$ following the lower part $\CC_{low}$ of the circle $\CC_{a}$ and then the segment from $z_{l}$ to $z$. For $k$ such that $\Im k\leq\Im z$, we integrate from $z_{r}$ to $z$ following the upper part of the circle $\CC_{up}$ and then again the segment from $z_{l}$ to $z$, see Figure \ref{path-even}.
\begin{figure}[htb]
\centering
\def\svgwidth{14cm}
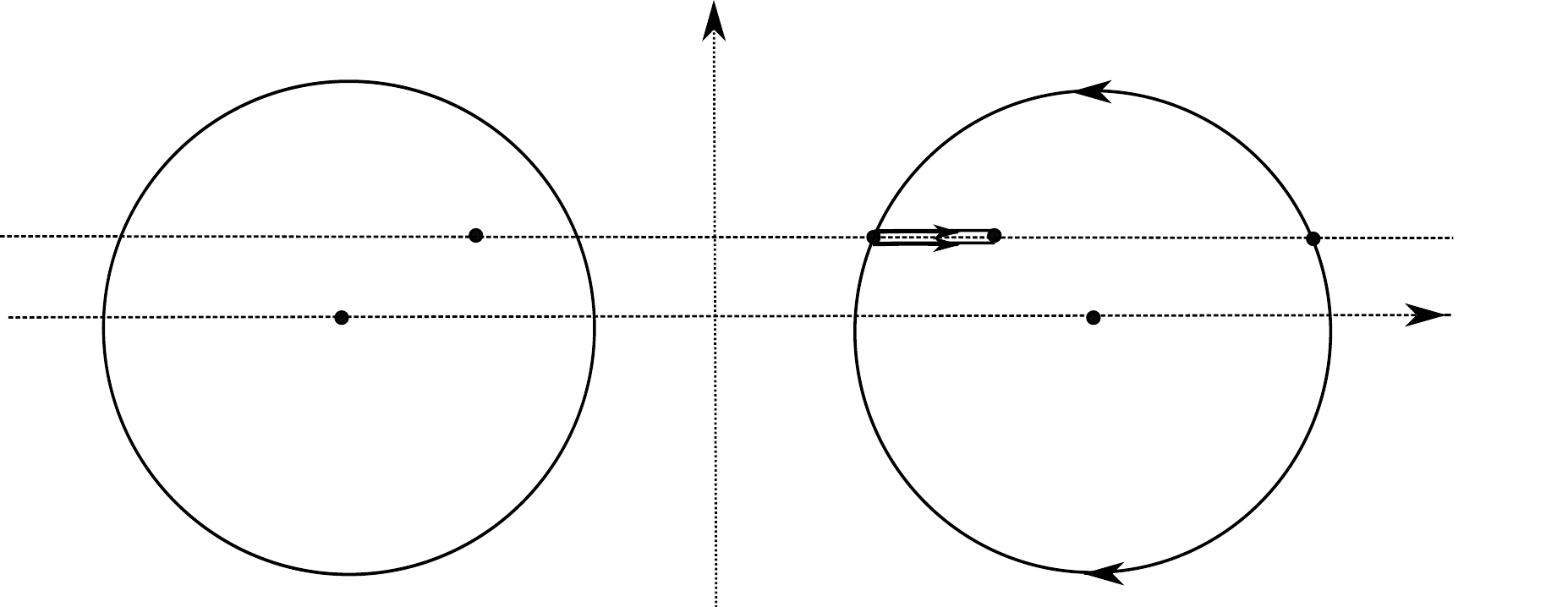
\caption{The paths of integration $\gamma_{1}$ and $\gamma_{2}$ from $z_{r}$ to $z$, respectively along $\CC_{up}$ and $\CC_{low}$ for the definition (\ref{def-phi-even}) of $\phi(z,k)$.}
\label{path-even}
\end{figure}
This defines $\phi(z,k)$ as an analytic function of $k$ outside of the line $\Delta_{z}$ through $z$ and $-\bar z$ where it may possibly have a jump. 
%Note that the bracketed terms give the polar parts of $\phi(z,k)$ at $z$ and $z_{r}$ while the last integral, as a function of $k$, has logarithmic singularities at these points.
We remark from (\ref{defW-pair}) and (\ref{def-phi-even}) that the function $k\mapsto\phi(z,k)$ has poles of order $m$ at $k\in\{-\bar z_{r},-\bar z,z,z_{r}\}$ if $m>0$ and logarithmic singularities if $m=0$. 
%Its polar parts at these points can be derived from (\ref{expans-phi}) and 
It also satisfies the symmetry relation
\begin{equation}\label{sym-phi-even}
\phi(z,-\bar k)=-\bar{\phi(z,k)}.
\end{equation}
In the sequel, we use the notations $\phi^{+}(z,k)$ and $\phi^{-}(z,k)$ for the limit values of $\phi(z,k)$ when $k$ tends to the left and right of a given arc, according to its orientation. The fact that the differential form $W(z',k)$ is closed in the disk $\DD_{a}$ when $k$ lies outside of that disk implies that there is no jump
$$J(z,k):=\phi^{+}(z,k)-\phi^{-}(z,k),$$
of $\phi(z,k)$ on the part of $\Delta_{z}$ outside of $\DD_{a}$ and $-\DD_{a}$. For $k$ inside the disks, we have
\begin{alignat*}{2}
J(z,k) & =-\int_{\CC_{a}}W(z',k),& \qquad k\in (z,z_{r})\cup (-\bar z_{r},-\bar z),\\
J(z,k) & =0, & \qquad k\in(z_{l},z)\cup (-\bar z,-\bar z_{l}).
\end{alignat*}
For computing the jump on $(-\bar z_{r},-\bar z)$, we have used the symmetry (\ref{sym-phi-even}). For the jump on $(z_{l},z)$, we have deformed the two paths of integration into the segment $(z_{r},z)$ which is possible by closedness of the differential form $W(z',k)$.
%On the other hand, $\phi(z,k)$, as a function of $k$, has at $z$ and $-\bar z$ logarithmic singularities if $m=0$ and polar singularities of order $m$ if $m>0$. 
Note that $k\mapsto J(z,k)$ has no singularity in $z$ and $z_{r}$. In $z$ it is clear, while in $z_{r}$, it can be seen by performing $m$ integrations by parts of $W(z',k)$ on the closed contour $\CC_{a}$, see also \cite[Section 4.4]{Gak} for a similar computation. Concerning the function $k\mapsto\phi(z,k)$, as said before, it has 
poles of order $m$ at $\{z, z_{r}, -\bar z, -\bar z_{r}\}$ if $m>0$ and logarithmic singularities if $m=0$. 
Moreover, at infinity, it behaves like
\begin{align*}
\phi(z,k)\to k^{-2m-1}(u(z)-u(z_{r})),\quad\text{as }k\to\infty.
%\phi(z,k)\to k^{-2m-1}(u(z)-u(z_{1})),\quad\text{as }k\to\infty\text{ with }\Im k<\Im z.
\end{align*}
If $m>0$, we renormalize the problem at infinity by defining
\begin{align}\label{norm-phi}
\tilde\phi(z,k)=((k-z)(k+\bar z))^{m}\phi(z,k),\\\label{norm-J}
\tilde J(z,k)=((k-z)(k+\bar z))^{m}J(z,k).
\end{align}
Then, the  function $k\mapsto\tilde\phi(z,k)$ behaves like $k^{-1}(u(z)-u(z_{r}))$ at infinity and is regular at $z$ and $-\bar z$, except for logarithmic singularities when $m=0$. 
It still has polar singularities at $z_{r}$ and $-\bar z_{r}$. Let us denote by $\tilde\phi_{z_{r},-\bar z_{r}}(z,k)$ its polar part at these points, that is the sum of the terms of negative degrees in the Laurent expansions of $\tilde\phi(z,k)$ at $z_{r}$ and $-\bar z_{r}$. The function $\tilde\phi-\tilde\phi_{z_{r},-\bar z_{r}}$ is analytic outside of the segments $(z,z_{r})$ and $(-\bar z_{r},-\bar z)$ where it has the jump $\tilde J(z,k)$. It has at most logarithmic singularities at $\{z,z_{r},-\bar z,-\bar z_{r}\}$ and vanishes at infinity.

These properties completely determine the function $\tilde\phi-\tilde\phi_{z_{r},-\bar z_{r}}$. Indeed, if there were another function with these properties, their difference would be entire and vanishing at infinity, hence the zero function. Thanks to the Plemelj formula, we have an integral expression for $\tilde\phi(z,k)-\tilde\phi_{z_{r},-\bar z_{r}}(z,k)$, namely,
\begin{equation}\label{explicit-phi}
\tilde\phi(z,k)-\tilde\phi_{z_{r},-\bar z_{r}}(z,k)=
\frac{1}{2i\pi}\int_{(-\bar z_{r},-\bar z)\cup(z,z_{r})}\frac{\tilde J(z,k')}{k'-k}dk'.
\end{equation}
Denoting by $a_{r}$ and $a_{-r}$ the residues of $\tilde\phi(z,k)$ at $k=z_{r}$ and $k=-\bar z_{r}$ respectively, and equating the coefficients of $k^{-1}$ in the expansion of (\ref{explicit-phi}) at infinity, we get
\begin{align*}
u(z)-u(z_{r}) & = a_{r}+a_{-r}-\frac{1}{2i\pi}\int_{(-\bar z_{r},-\bar z)\cup(z,z_{r})}\tilde J(z,k')dk'\\
& =2\Re(a_{r})-\frac{1}{\pi}\Im\int_{(z,z_{r})}\tilde J(z,k')dk',
\end{align*}
where we have used the symmetry relation $\tilde\phi(z,-\bar k)=-\bar{\tilde\phi(z,k)}$.
It remains to show that the residue $a_{r}$ can be explicitly computed from the knowledge of the derivatives $u_{t}$ and $u_{n}$ on $\CC_{a}$. From (\ref{norm-phi}), it is sufficient to know the polar part of $\phi(z,k)$ at $z_{r}$. 
To compute this polar part, we first rewrites $W(z,k)$, defined in (\ref{defW-pair}), by expressing the complex derivatives in terms of the tangential and (outer) normal derivatives on the circle $\CC_{a}$,
\begin{equation}\label{polar}
u_{z}dz =\frac{1}{2}(u_{t}+iu_{n})ds,\qquad
u_{\bar z}d\bar z =\frac{1}{2}(u_{t}-iu_{n})ds,
\end{equation}
with $ds$ the length element on $\CC_{a}$. We get, with $z=x+iy\in\CC_{a}$,
%the differential form $W(z,k)$, defined in (\ref{defW}), rewrites on $C$ as
\begin{align*}%\label{Wtn}
W(z,k) & =\big((k-z)(k+\bar z)\big)^{-m-1}\left((k-iy)u_{t}(z)+ixu_{n}(z)\right)ds,\\
& = (k-z)^{-m-1}w(z,k)dz
\end{align*}
where 
$$w(z,k):=(k+\bar z)^{-m-1}\left((k-iy)u_{t}(z)+ixu_{n}(z)\right)\tau^{-1}(z)$$ 
and $\tau(z)$ denotes the unit vector tangent to $\CC_{a}$ at the point $z$. Let us set
\begin{equation}\label{def-der}
\tilde\partial_{t}f:=\tau^{-1}(z)\partial_{t}f
\end{equation}
for a function $f$ on $\CC_{a}$. Because of analiticity, 
$$\partial_{z}(k-z)^{-j-1}dz=\partial_{t}(k-z)^{-j-1}ds=\tilde\partial_{t}(k-z)^{-j-1}dz,\quad j=m-1,\ldots,0.$$ 
Hence, performing $m$ integrations by parts on the integral in (\ref{def-phi-even}), we obtain
\begin{multline}
\label{expans-phi}
\phi(z,k)=c_{0}\left[(k-z')^{-m}w(z',k)\right]_{z_{r}}^{z}+\cdots \\
+
c_{m-1}\left[(k-z')^{-1}\tilde\partial_{t}^{(m-1)}w(z',k)\right]_{z_{r}}^{z}
-c_{m-1}\int_{z_{r}}^{z}
(k-z')^{-1}\tilde\partial_{t}^{(m)}w(z',k)dz',
\end{multline}
where
%$$c_{0}=(m-1/2)^{-1},\ldots,c_{m-1}=(-1)^{m-1}\Gamma(1/2)/\Gamma(m+1/2).$$
$$c_{j}=(-1)^{j}\Gamma(m-j)/\Gamma(m+1),\qquad j=0,\ldots,m-1.$$
The polar part of $\phi(z,k)$ at $z_{r}$ can be read in the bracketed terms in (\ref{expans-phi}) as
$$-c_{0}\sum_{j=0}^{m-1}\frac{\partial_{k}^{(j)}w(z_{r},z_{r})}{j!(k-z_{r})^{m-j}}-\cdots
-c_{m-1}\frac{\tilde\partial_{t}^{(m-1)}w(z_{r},z_{r})}{k-z_{r}},$$
where $\partial_{k}$ denotes the operator of differentiation with respect to the variable $k$.
This finishes the proof of Theorem \ref{expl-even}.
\end{proof}
%%%%%%%%%%%%%%%%%%%%%%%%%%%%%%%%%%%%%
\subsection{Case of a negative odd integer coefficient $\alpha=-2m+1$, $m\in\N$}
\begin{proof}[Proof of Theorem \ref{expl-odd}]
The differential form (\ref{defW}) now rewrites as
\begin{equation}\label{defW-imp}
W(z,k)=\big((k-z)(k+\bar{z})\big)^{-m-1/2}\big((k+\bar z)u_{z}(z)dz+
(k-z)u_{\bar{z}}(z)d\bar{z}\big).
\end{equation}
%with 
%$$\lambda(z,k)=\sqrt{\frac{k-z}{k+\bar{z}}}.$$
The novelty with respect to the even coefficient case is that $W(z,k)$ involves the square root 
$$\lambda(z,k)=\sqrt{(k-z)(k+\bar z)}$$
which, as a function of $k$, is defined on a Riemann surface $\Scal_{z}$ of genus 0, consisting of two copies of $\C$, denoted by $\Scal_{z,1}$ for the upper sheet, and by $\Scal_{z,2}$ for the lower sheet. The two sheets are glued together along a branch cut from $z$ to $-\bar z$, that we choose to be the horizontal segment $(-\bar{z},z)$. We denote by $\lambda_{1}$ and $\lambda_{2}$ the determination of the square root $\lambda$, as a function of $k$, on the upper and lower sheets of $\Scal_{z}$ where we assume that 
\begin{alignat}{2}
&\lambda_{1}(z,k)= k(1+\OO(1/k)),& \quad&\text{as $k\to\infty_{1}$ on the upper sheet $\Scal_{z,1}$},\label{inf1}\\
&\lambda_{2}(z,k)= -k(1+\OO(1/k)), & \quad&\text{as $k\to\infty_{2}$ on the lower sheet $\Scal_{z,2}$}.\label{inf2}
\end{alignat}
We also denote by $W_{1}$ and $W_{2}$ the values of the differential form $W$ corresponding to the determinations $\lambda_{1}$ and $\lambda_{2}$ of the square root. For future use, we remark that the function $\lambda(z,k)$ and the differential form $W(z,k)$ satisfy the following symmetry relations:
\begin{equation}\label{sym-W}
\lambda(z,-\overline k)=-\overline{\lambda(z,k)},\qquad W(z,-\overline k)=\overline{W(z,k)}.
\end{equation}

As in the previous section, we construct a function $\phi$ of the form
\begin{equation}\label{def-phi}
\phi(z,k)=\int_{z_{r}}^{z} W(z',k),
\end{equation}
where the path of integration needs to be defined. 
Similarly to the method applied in \cite{LF}, we define, for each $z$, the function $k\to\phi(z,k)$ as a map from the Riemann surface $\Scal_{z}$ to $\C$. The path of integration in (\ref{def-phi}) is chosen to be $\gamma_{1}$ when $k\in\Scal_{z,1}$ and $\gamma_{2}$ when $k\in\Scal_{z,2}$, see Figures \ref{regionI} and
\ref{regionII}. 
\begin{figure}[htb]
\centering
\def\svgwidth{14cm}
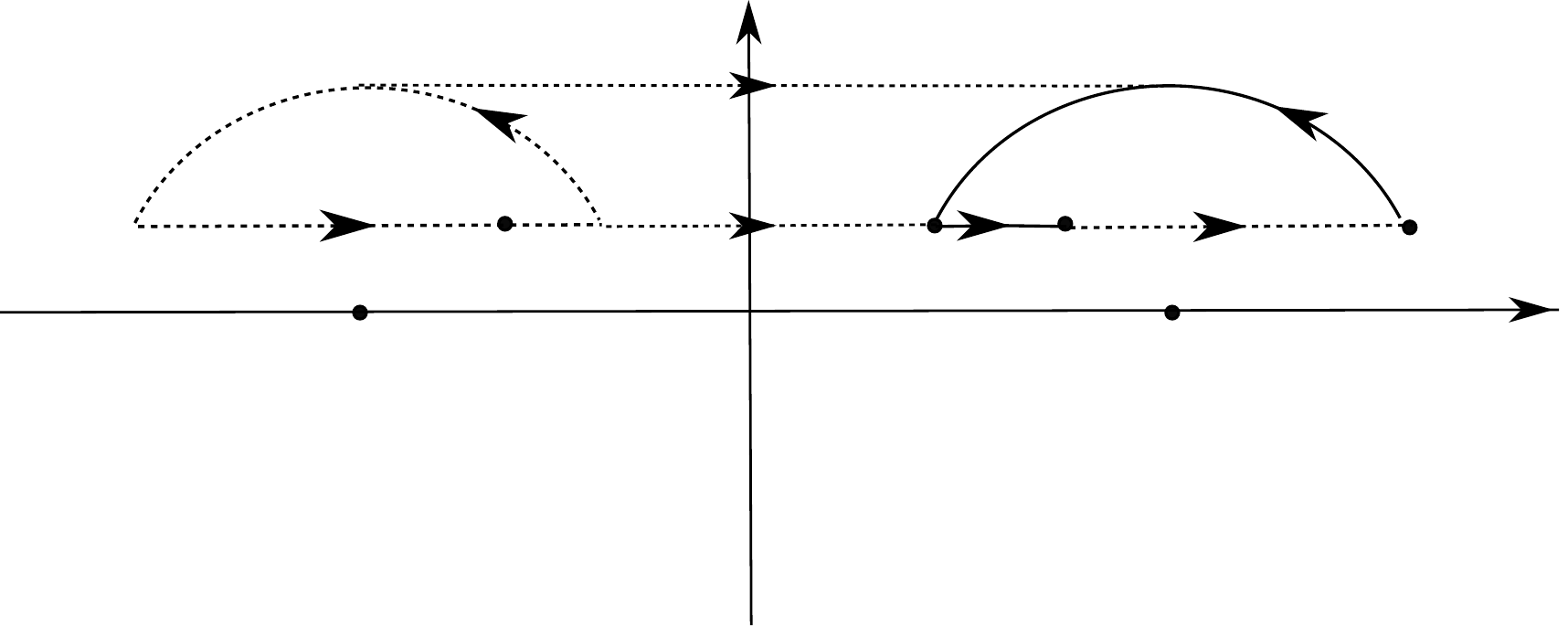
\caption{Orientation of contours and path of integration $\gamma_{1}$ (solid line) for the definition (\ref{def-phi}) of $\phi(z,k)$ on the upper sheet $\Scal_{z,1}$. The path starts at $z_{r}$, goes along the upper part $\CC_{up}$ of the circle $\CC_{a}$ to $z_{l}$ and then follows the horizontal segment up to $z$. 
%For $k$ in the different regions shown on the figure, 
The choice of the determination of the square root $\lambda$ at the initial point $z_{r}$ %is different.
of the path $\gamma_{1}$ depends on the region which contains $k$.}
\label{regionI}
\end{figure}
\begin{figure}
\centering
\def\svgwidth{14cm}
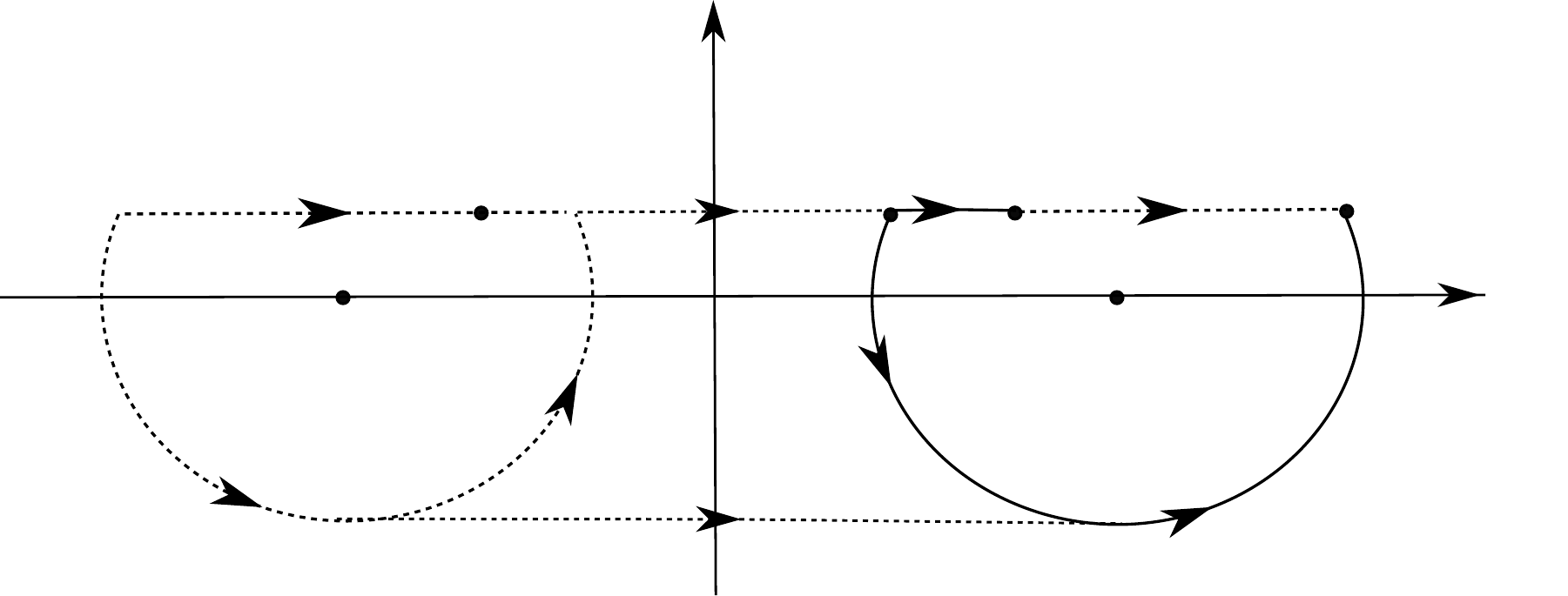
\caption{Orientation of contours and path of integration $\gamma_{2}$ (solid line) for the definition (\ref{def-phi}) of $\phi(z,k)$ on the lower sheet $\Scal_{z,2}$. The path starts at $z_{r}$, goes along the lower part $\CC_{low}$ of the circle $\CC_{a}$ to $z_{l}$ (thus describes $\CC_{low}$ clockwise) and then follows the horizontal segment up to $z$. %For $k$ in the different regions shown on the figure, 
The choice of the determination of the square root $\lambda$ at the initial point $z_{r}$ %is different.
of the path $\gamma_{2}$ depends on the region which contains $k$.}
\label{regionII}
\end{figure}
Note that, when $k$ lies outside of the convex hull of the two symmetric circles, the branch cut from $z'$ to $-\bar z'$ never intersects $k$ as $z'$ goes from $z_{r}$ to $z$ along $\gamma_{1}$ or $\gamma_{2}$. For such $k$, we may therefore use the same determination of the square root $\lambda(z',k)$ for computing the integral (\ref{def-phi}).
When $k$ lies inside one of the circles (i.e. regions I and II in Figures \ref{regionI} and \ref{regionII}), the branch cut intersects $k$ once. 
Hence, for $k\in\Scal_{z,1}$ lying in the upper part of one of the two circles, we use the determination of the square root $\lambda_{2}(z',k)$ on the first part of integration, before the crossing, and the determination $\lambda_{1}(z',k)$ after the crossing, and conversely for $k\in\Scal_{z,2}$ lying in the lower part of one of the two circles.
Finally, when $k$ lies between the two circles, the branch cut intersects $k$ twice, so that, in this case, we start integration with the determination of the square root $\lambda(z',k)$ corresponding to $k$, then change to the other determination after the first crossing, and come back to the first determination after the second crossing.

The function $\phi(z,k)$ is an analytic function of $k\in\Scal_{z}$ outside of arcs where it has jumps. Note that, in view of (\ref{defW-imp}), it has poles of order $m$ at $k\in\{z,z_{r},-\bar z,-\bar z_{r}\}$. 

As in the previous section, $\phi^{+}(z,k)$ and $\phi^{-}(z,k)$ denote the limit values of $\phi(z,k)$ when $k$ tends to the left and right of a given contour, according to its orientation. Let us compute the jumps, 
$$J(z,k)=\phi^{+}(z,k)-\phi^{-}(z,k),$$
of the function $\phi(z,k)$ on the different arcs shown in Figures \ref{regionI} and \ref{regionII}. 
As in the previous section, we denote by $\CC_{up}$ the part of the circle $\CC_{a}=C(a,1)$ above the segment $(z_{l},z_{r})$ and by $\CC_{low}$ the part of the circle $\CC_{a}$ below that segment. For $k$ on one of the two circles, we denote by $\widetilde k$ the other point on the circle with the same imaginary part, that is, we have 
\begin{align*}
\widetilde k-a & =-\overline{(k-a)},\qquad k\in \CC_{a}\\
\widetilde k+a & =-\overline{(k+a)},\qquad k\in -\CC_{a}.
\end{align*}
To compute the jumps $J(z,k)$ we first rewrites $W(z,k)$, defined in (\ref{defW-imp}), 
in terms of $ds$, the length element on $\CC_{a}$. By the same computation as the one in the previous section, we now get, with $z=x+iy\in\CC_{a}$,
%the differential form $W(z,k)$, defined in (\ref{defW}), rewrites on $C$ as
\begin{align*}%\label{Wtn}
W(z,k) & =\big((k-z)(k+\bar z)\big)^{-m-1/2}\left((k-iy)u_{t}(z)+ixu_{n}(z)\right)ds,\\
& = (k-z)^{-m-1/2}w(z,k)dz
\end{align*}
where 
$$w(z,k):=(k+\bar z)^{-m-1/2}\left((k-iy)u_{t}(z)+ixu_{n}(z)\right)\tau^{-1}(z)$$ 
and $\tau(z)$ still denotes the unit vector tangent to $\CC_{a}$ at the point $z$. 
%Let us set
%$$\tilde\partial_{t}f=\tau^{-1}(z)\partial_{t}f$$ 
%for a function $f$ on $\CC_{a}$. Because of analiticity, 
%$$\partial_{z}(k-z)^{-j-1/2}dz=\partial_{t}(k-z)^{-j-1/2}ds=\tilde\partial_{t}(k-z)^{-j-1/2}dz,\quad j=m-1,\ldots,0.$$ 
Performing $m$ integrations by parts on the integral in (\ref{def-phi}), we obtain in a similar way as in the previous section,
\begin{multline}\label{exp-phi}
\phi(z,k)=c_{0}\left[(k-z')^{-m+1/2}w(z',k)\right]_{z_{r}}^{z}+\cdots \\
+
c_{m-1}\left[(k-z')^{-1/2}\tilde\partial_{t}^{(m-1)}w(z',k)\right]_{z_{r}}^{z}
-c_{m-1}\int_{z_{r}}^{z}
(k-z')^{-1/2}\tilde\partial_{t}^{(m)}w(z',k)dz',
\end{multline}
where
%$$c_{0}=(m-1/2)^{-1},\ldots,c_{m-1}=(-1)^{m-1}\Gamma(1/2)/\Gamma(m+1/2).$$
$$c_{j}=(-1)^{j}\Gamma(m-1/2-j)/\Gamma(m+1/2),\qquad j=0,\ldots,m-1,$$
and the operator $\tilde\partial_{t}f$ is still defined by (\ref{def-der}). The bracketed terms contain the polar parts of $\phi(z,k)$, of degree $m$, at $z$ and $z_{r}$.
Note that the last integral converges when $k\in\CC_{a}$. Assume $k\in\Scal_{z,1}$ and lies on the right half of 
$\CC_{up}$. Then, from the above definition of $\phi$, we derive that
$$J(z,k)=J_{1}^{0}(z_{r},k)+\int_{z_{r}}^{k}\tilde{W}_{1}(z',k),$$
with
\begin{align}\label{def-J0}
J_{1}^{0}(z_{r},k)=2\sum_{j=0}^{m-1}
%2c_{j}(k-z_{r})^{-m+1/2}w_{1}(z_{r},k)+\cdots \\+
c_{j}(k-z_{r})^{j-m+1/2}\tilde\partial_{t}^{(j)}w_{1}(z_{r},k),\\\label{def-W1}
\tilde{W}_{1}(z',k)=2c_{m-1}
(k-z')^{-1/2}\tilde\partial_{t}^{(m)}w_{1}(z',k)dz',
\end{align}
where the subscript 1 in the above expressions means that we use the determination $\lambda_{1}$ of the square root to evaluate them. The jumps on the left half of $\CC_{up}$ on $\Scal_{z,1}$ and on $\CC_{low}$ on $\Scal_{z,2}$ can be computed in the same way. The jumps on $-\overline\CC_{up}$ and $-\overline\CC_{low}$ can be derived from the symmetry relation satisfied by $\phi(z,k)$,
$$\phi(z,-\bar k)=\overline{\phi(z,k)}.$$% it can be checked that,
%Then, from the definition of $\phi(z,k)$, 
The result is as follows. For $k\in\Scal_{z,1}$, 
\allowdisplaybreaks{
\begin{alignat}{2}
J(z,k) & =J_{1}^{0}(z_{r},k)+\int_{(z_{r},k)}\tilde{W}_{1}(z',k),\quad  & & k\text{ on the right half of }\CC_{up},\label{D11}\\
J(z,k) & =J_{1}^{0}(z_{r},k)+\int_{(z_{r},\widetilde k)}\tilde W_{1}(z',k)-\int_{(\widetilde k,k)}\tilde W_{1}(z',k),\quad  & & k\text{ on the left half of }\CC_{up},\label{D12}\\
J(z,k) & = \overline{J(z,-\bar k)},\quad & & k\text{ on }-\overline \CC_{up},\label{D13}
\end{alignat}
}
where the path of integration $(a,b)$ in each of the above integrals is the subarc of $\CC_{up}$ from $a$ to $b$, in the positive direction.
% and where in $W(z',k)$ we use the determination $\lambda_1$ of the square root. 
For $k\in\Scal_{z,2}$, we have
\allowdisplaybreaks{
\begin{alignat}{2}
J(z,k) & =J_{2}^{0}(z_{r},k)+\int_{(z_{r},k)}\tilde{W}_{2}(z',k),
%-2\int_{(z_{r},k)}W_{2}(z',k),
\quad  & & k\text{ on the right half of }\CC_{low},
\label{D21}\\
J(z,k) & =J_{2}^{0}(z_{r},k)+\int_{(z_{r},\widetilde k)}\tilde W_{2}(z',k)-\int_{(\widetilde k,k)}\tilde W_{2}(z',k),
%-2\int_{(z_{r},\widetilde k)}W_{2}(z',k)+2\int_{(\widetilde k,k)}W_{2}(z',k),
\quad  & & k\text{ on the left half of }\CC_{low},\label{D22}\\
J(z,k) & =\overline{J(z,-\bar k)},\quad & & k\text{ on }-\overline \CC_{low},\label{D23}
\end{alignat}
}
where $(a,b)$ in each of the integrals denotes the subarc of $\CC_{low}$ from $a$ to $b$, in the negative direction. %and where in $W(z',k)$ we use the determination $\lambda_2$ of the square root.

Note that the above jumps take place on two open contours on the Riemann surface $\Scal_{z}$. The first one starts at $z_{r,1}$ on the first sheet, follows the circle $\CC_{a}$, goes through the cut at $z_{l}$ and finishes at $z_{r,2}$ on the second sheet. The second  one start at $-\bar z_{r,1}$ on the first sheet, follows the circle $-\CC_{a}$, goes through the cut at $-\bar z_{l}$ and finishes at $-\bar z_{r,2}$ on the second sheet. Note also that the jump on the first contour is continuous at $z_{l}$ (compare (\ref{D12}) and (\ref{D22}) at $k=z_{l}$), and, by symmetry, the same is true of the jump on the second contour at $-\bar z_{l}$.
% (compare (\ref{D14}) and (\ref{D24}) at $k=-\bar z_{l}$).

It follows, simply from the definition (\ref{def-phi}) of $\phi(z,k)$ and the choice of the determinations, that, on each sheet, $\phi(z,k)$ has no jumps on $(z,z_{r})$ and $(-\bar z_{r},-\bar z)$. On the branch cut
%We let the reader check that, on each sheet, $\phi(z,k)$ has no other jumps, and also no jump across the branch cut 
$(-\bar z,z)$ of the Riemann surface $\Scal_z$ there is also no jumps. Indeed, on  the part $(-\bar z_{l},z_{l})$, $k$ is not close from the paths of integration and we can use the definition (\ref{def-phi}) to compute the jumps. One has
$$\phi^{+}_{1}(z,k)-\phi^{-}_{2}(z,k)
=\int_{z_{r}}^{z}W^{+}_{1}(z',k)-\int_{z_{r}}^{z}W^{-}_{2}(z',k)=0,$$
$$\phi^{-}_{1}(z,k)-\phi^{+}_{2}(z,k)
=\int_{z_{r}}^{z}W^{-}_{1}(z',k)-\int_{z_{r}}^{z}W^{+}_{2}(z',k)=0,$$
where we have deformed the original paths of integration $\gamma_{1}$ and $\gamma_{2}$ into the segment $(z_{r},z)$, which is possible since the differential form $W(z,k)$ is closed. 
When $k\in\Scal_{z,1}$ is on the $-$ side of $(z_{l},z)$ or $k\in\Scal_{z,2}$ is on the $+$ side of $(z_{l},z)$, we may deform both paths $\gamma_{1}$ and $\gamma_{2}$ into the segment $(z_{r},z)$ and we get
$$\phi^{-}_{1}(z,k)-\phi^{+}_{2}(z,k)=\int_{z_{r}}^{z}W^{-}_{1}(z',k)-\int_{z_{r}}^{z}
W^{+}_{2}(z',k)=0.$$
Finally, when $k\in\Scal_{z,1}$ is on the $+$ side of $(z_{l},z)$ or $k\in\Scal_{z,2}$ is on the $-$ side of $(z_{l},z)$, we deform both paths $\gamma_{1}$ and $\gamma_{2}$ into the segment $(z_{r},k)$ followed by the segment $(k,z)$. 
Here $k$ is closed from the paths of integration, so we use (\ref{exp-phi}) to compute the jumps. We get
\begin{align*}
\phi^{+}_{1}(z,k) & =-\frac12\int_{z_{r}}^{k}\tilde W^{-}_{1}(z',k)-\frac12\int_{k}^{z}\tilde W^{+}_{1}(z',k)\\
& =-\frac12\int_{z_{r}}^{k}\tilde W^{+}_{2}(z',k)-\frac12\int_{k}^{z}\tilde W^{-}_{2}(z',k)=\phi^{-}_{2}(z,k),
\end{align*}
where we note that the bracketed terms in (\ref{exp-phi}) do not contribute to the jumps.

For completeness, let us remark that, if we would consider $\phi(z,k)$ as a function on one of the two sheets only, e.g. $\Scal_{z,1}$, then it would have a jump on the segment 
$(-\bar z_{l},z_{l})$, considered as an arc on $\Scal_{z,1}$, namely
$$\int_{\CC_{up}}\tilde W_{1}(z',k),\quad k\in (-\bar z_{l},z_{l}).$$
On the second sheet $\Scal_{z,2}$, a jump would also occurs,
$$-\int_{\CC_{low}}\tilde W_{2}(z',k),\quad k\in (-\bar z_{l},z_{l}),$$
which is actually opposite to the previous one. 

The term $J^{0}(z_{r},k)$ has a polar singularity of order $m$ at $k=z_{r}$ so that the jumps (\ref{D11})--(\ref{D23}) have polar singularities of order $m$ at either $z_{r}$ or $-\bar z_{r}$. Hence, instead of $\phi(z,k)$, we consider
$$\tilde{\phi}(z,k)=\big((k-z_{r})(k+\bar{z}_{r})\big)^{m}\phi(z,k),$$
whose jumps 
\begin{equation}\label{tilde-jump}
\tilde J(z,k)=\big((k-z_{r})(k+\bar{z}_{r})\big)^{m}J(z,k)
\end{equation}
are regular (and lie on the same contours as those of $\phi(z,k)$). 
%From the values (\ref{D11})--(\ref{D24}) of the jumps $J(z,k)$ and the symmetry relation (\ref{sym-W}) for $W$ follows a similar symmetry for $\tilde J(z,k)$, namely
%\begin{equation}\label{sym-D}
%\tilde J(z,-\overline k)=\overline{\tilde J(z,k)}.
%\end{equation}
From the definitions of $W$ and $\phi$, and in view of (\ref{inf1})--(\ref{inf2}), we have
\begin{align}
\lim_{k\to\infty_{1}}\tilde\phi(z,k)& =\int_{z_{r}}^{z}du=u(z)-u(z_{r}),\label{beh-phi-1}\\
\lim_{k\to\infty_{2}}\tilde\phi(z,k)& =-\int_{z_{r}}^{z}du=u(z_{r})-u(z),\label{beh-phi-2}
\end{align}
so that, in particular,
\begin{equation}\label{beh-phi-inf}
\tilde\phi(z,\infty_{1})=-\tilde\phi(z,\infty_{2}).
\end{equation}
Next, the function $k\to\tilde\phi(z,k)$ remains bounded near the four endpoints $z_{r,1}$, $z_{r,2}$, $-\bar{z}_{r,1}$, $-\bar{z}_{r,2}$ of the two contours where the jumps
(\ref{D11})--(\ref{D23}) take place. Indeed, near $z_{r,1}$, $W(z,k)$ is of order $(k-z_{r,1})^{-m-1/2}$ and consequently $\tilde\phi(z,k)$ is of order $(k-z_{r,1})^{1/2}$. The same fact holds true near the three other points. As a last remark, let us mention that 
$\tilde\phi(z,k)$ has poles of order $m$ at $z$ and $-\bar z$ since this holds true for $\phi(z,k)$. We denote by $\tilde\phi_{z,-\bar z}(z,k)$ the sum of its polar parts at $z$ and $-\bar z$.

The jumps (\ref{tilde-jump}), the relation (\ref{beh-phi-inf}) between the values at infinities and the boundedness near the endpoints completely characterize the function $\tilde\phi-\tilde\phi_{z,-\bar z}$ on $\Scal_{z}$. Indeed, if there are two such functions, then their difference would be analytic on the compact Riemann surface $\overline\Scal_{z}$, hence constant. Since it would also satisfy the relation (\ref{beh-phi-inf}), it could only be the zero function.  An explicit expression can be given for the unique solution of the Riemann-Hilbert problem defined by the previous conditions (jumps, relation between the values at $\infty_{1}$ and $\infty_{2}$, and boundedness near the endpoints), namely
\begin{multline}\label{plemelj}
\tilde\phi(z,k)-\tilde\phi_{z,-\bar z}(z,k)=\frac{1}{4i\pi}\int_{\CC_{up}\cup-\overline{\CC}_{up}}\tilde J(z,k')
\left(\frac{\lambda(z,k)%(k+\bar z)
}{\lambda_{1}(z,k')%(k'+\bar z)
}+1\right)\frac{dk'}{k'-k}
\\[10pt]
+\frac{1}{4i\pi}\int_{\CC_{low}\cup-\overline{\CC}_{low}}\tilde J(z,k')
\left(\frac{\lambda(z,k)%(k+\bar z)
}{\lambda_{2}(z,k')%(k'+\bar z)
}+1\right)\frac{dk'}{k'-k},
\end{multline}
where the contours of integration are oriented as in Figures \ref{regionI} and \ref{regionII}. In the first integral $k'\in\Scal_{z,1}$ and in the second integral $k'\in\Scal_{z,2}$. Let us check that
the expression in (\ref{plemelj}) satisfies the characterizing properties of $\tilde\phi-\tilde\phi_{z,-\bar z}$. Indeed, it defines an analytic function of $k$ on $\Scal_{z}$ outside of the two contours $\CC_{up}\cup \CC_{low}$ and 
$-\overline{\CC}_{up}\cup-\overline{\CC}_{low}$.
Thanks to the Plemelj formula, we see that it has the right jumps on these contours. Because of (\ref{inf1})--(\ref{inf2}), it also satisfies relation (\ref{beh-phi-inf}). From the formulas (\ref{D11}), (\ref{D13}), (\ref{D21}), (\ref{D23}), we get that $\tilde J(z,k')$ vanishes at the endpoints of the two contours.
Hence, the expression in (\ref{plemelj})
remains bounded near these endpoints, see \cite[Lemma 7.2.2]{AF} for details. 

Making use of (\ref{beh-phi-1}), we deduce from (\ref{plemelj}), where we rename $k'$ as $k$, that
\begin{align*}
u(z)-u(z_{r})
& =-\frac{1}{4i\pi}\int_{\CC_{up}\cup-\overline{\CC}_{up}}\frac{\tilde J(z,k)}{
\lambda_{1}(z,k)}dk
-\frac{1}{4i\pi}\int_{\CC_{low}\cup-\overline{\CC}_{low}}\frac{\tilde J(z,k)}{
\lambda_{2}(z,k)}dk
\\
 & =-\frac{1}{2\pi}\Im\int_{\CC_{up}}\frac{\tilde J(z,k)}{
\lambda_{1}(z,k)}dk
-\frac{1}{2\pi}\Im\int_{\CC_{low}}\frac{\tilde J(z,k)}{
\lambda_{2}(z,k)}dk\\
& =-\frac{1}{2\pi}\Im\int_{\CC_{a}}\frac{\tilde J(z,k)}{
\sqrt{(k-z)(k+\bar z)}}dk.
\end{align*}
Note that the polar part $\tilde\phi_{z,-\bar z}$ does not give any contribution in the above computation as $k$ tends to infinity.
In the second equality we have used the first identity in (\ref{sym-W}) and the fact that
$$\tilde J(z,-\bar k)=\overline{\tilde J(z,k)}.$$ 
In the last expression, we integrate along the circle $\CC_{a}$ oriented counter-clockwise, starting at $z_{r}$ with the determination of the square root that behaves like $k$ at infinity. At $k=z_{l}$, the determination changes so that the square root remains continuous along the path of integration. This finishes the proof of Theorem \ref{expl-odd}.
\end{proof}
%\subsection{Case of a general Jordan domain $\Omega$}\label{gen-domain}
The method used in the proofs of Theorems \ref{expl-even} and \ref{expl-odd} could be applied in case of a general, bounded, simply-connected domain $\Omega$ in the right half-plane $\H$. For the definition of the corresponding function $\phi(z,k)$, in particular the paths of integration $\gamma_{1}$ and $\gamma_{2}$, one could replace the points $z_{r}$ (resp. $z_{l}$) by e.g. a point $\omega_{r}$ (resp. $\omega_{l}$) on the boundary of $\Omega$ of smallest (resp. largest) abscissa (hence independant of $z$). The segment $(z_{l},z)$ may be replaced with any path from $\omega_{l}$ to $z$. In case of an even coefficient $\alpha$, another path from $\omega_{r}$ to $z$ can be fixed to obtain a path from $\omega_{r}$ to $\omega_{l}$ that separates $\Omega$ into a lower and an upper part.
\section{The Dirichlet-Neumann map for the case of an even coefficient $\alpha$}
\label{global}
%Here we consider the case $\alpha=-2(m-1)$, $m\in\N^{*}$. 
This section is devoted to the proof of Theorem \ref{corresp}. Theorems \ref{expl-even} and \ref{expl-odd} give an expression of the solution to (\ref{Grad-S}), and in particular of the jump $J(z,k)$, in terms of its tangential and normal derivatives on the boundary of the disk $\DD_{a}$. In practice, only one type of boundary data is usually known and it is thus important to determine if this information is sufficient for computing the solution.

Using the link (\ref{polar}) between complex and directional derivatives, the global relation, derived from Proposition \ref{Prop-W} and Poincar\'e lemma, 
\begin{equation*}%\label{rel-glob}
\int_{\CC_{a}}[(k-z)(k+\bar z)]^{\alpha/2-1}
((k+\bar z)u_{z}(z)dz+(k-z)u_{\bar z}(z)d\bar z)=0,%\qquad k\in\C\setminus(\DD_{a}\cup\DD_{-a}),
\end{equation*}
can be rewritten as
\begin{equation}\label{rel-glob}
\int_{\CC_{a}}[(k-z)(k+\bar z)]^{\alpha/2-1}
((y+ik)u_{t}(z)-xu_{n}(z))ds=0,%\qquad k\in\C\setminus(\DD_{a}\cup\DD_{-a}),
\end{equation}
for $k\in\C\setminus(\DD_{a}\cup\DD_{-a})$. We first show that, when $\alpha$ is an even negative integer, (\ref{rel-glob})
allows one to recover the Neumann data $u_{n}$ from the Dirichlet data $u_{t}$.
%, that is we prove assertion (i) of Theorem \ref{corresp}. 
%We first show that $u_{t}=0$ implies $u_{n}=0$, namely we prove the following proposition.
\subsection{Case of an even negative integer coefficient $\alpha$}
\begin{proof}[Proof of Theorem \ref{corresp} when $\alpha=-2(m-1)$, $m\in\N^{*}$]
Equivalently to recovering the Neumann data $u_{n}$, we may recover the function 
\begin{equation}\label{def-f}
f(z):=(x+a)u_{n}(z+a),\qquad z\in\T,
\end{equation}
from the relation
\begin{equation}\label{rel-glob2}
\int_{\T}\frac{z^{m-1}f(z)}{(z-(k-a))^{m}(z+(k+a)^{-1})^{m}}dz=
\int_{\T}\frac{z^{m-1}(y+ik)u_{t}(z+a)}{(z-(k-a))^{m}(z+(k+a)^{-1})^{m}}dz,
\end{equation}
where  $k\in\C\setminus(\DD_{a}\cup\DD_{-a})$ and we integrate on $\T$ instead of $\CC_{a}$. 
%Here $f\in L^{2}(\T)$ so that the above equality actually holds almost everywhere on $\T$.

Now, we set 
$$
%\begin{lemma}
%Assume relation (\ref{eq-inject}) holds true.
%Then the function $h(z)$ satisfies a differential equation of order $m-1$,
%\begin{equation}\label{diff-eq}
%\sum_{p=0}^{m-1}\alpha_{p}z^{m-1-p}(z^{2}+2az+1)^{p}h^{(p)}(z)=P_{2m-2}(z),\quad \alpha_{p}=(-1)^{m-1+p}\frac{(2m-p-2)!}{(m-p-1)!p!},
%\end{equation}
%where $P_{2m-2}(z)$ is a polynomial of degree at most $2m-2$.
%\end{lemma}
\mu:=-1/(k+a),\qquad \varphi(z):=\frac{-z}{1+2az}.$$
The map $\varphi$ is involutive and sends $\mu$ to $1/(k-a)$. Moreover, 
$k\in\C\setminus(\DD_{a}\cup\DD_{-a})$ is equivalent to the fact that $\mu$ (or $\varphi(\mu)$) belongs to the annulus 
$$\A:=\D\setminus D\left(\frac{-2a}{4a^{2}-1},\frac{1}{4a^{2}-1}\right),$$
so that (\ref{rel-glob2}) can be rewritten
\begin{equation}\label{rel-glob3}
\int_{\T}\frac{z^{m-1}f(z)}{(1-\varphi(\mu)z)^{m}(z-\mu)^{m}}dz=
\frac1\mu\int_{\T}\frac{z^{m-1}(\mu y-i(a\mu+1))u_{t}(z+a)}{(1-\varphi(\mu)z)^{m}(z-\mu)^{m}}dz,
\qquad \mu\in\A.
\end{equation}
%where
%\begin{equation}\label{def-psi}
%\qquad\Psi(\mu):=\frac1\mu\int_{\T}\frac{z^{m-1}(\mu y-i(a\mu+1))u_{t}(z+a)}{(1-\varphi(\mu)z)^{m}(z-\mu)^{m}}dz.
%\end{equation}
The functions $f(z), u_{t}(z+a), yu_{t}(z+a)$ are real-valued in $L^{2}(\T)$, so they can be decomposed as 
\begin{equation}\label{f-g}
f(z)=g_{1}(z)+\bar g_{1}(1/z),\quad u_{t}(z+a)=g_{2}(z)+\bar g_{2}(1/z),\quad
yu_{t}(z+a)=g_{3}(z)+\bar g_{3}(1/z),
\end{equation}
with $g_{i}(z)$ holomorphic on $\D$ and  $\bar g_{i}(1/z)$ holomorphic on $\bar\C\setminus\D$ for $i=1,2,3$. Note that the imaginary parts of $g_{1}(0), g_{2}(0), g_{3}(0)$ can be chosen at will in the decompositions (\ref{f-g}). 
%In the sequel, we will assume that $\Im g(0)=0$. 
For $i=1,2,3$, let $h_{i}(z):=z^{m-1}g_{i}(z)$, and
\begin{equation}\label{def-Phi}
\Phi_{i}(\mu):=\int_{\T}\frac{z^{m-1}g_{i}(z)}{(1-\varphi(\mu)z)^{m}(z-\mu)^{m}}dz
=\frac{2i\pi}{(m-1)!}\left(\frac{h_{i}(z)}{(1-\varphi(\mu)z)^{m}}
\right)^{(m-1)}(\mu),\qquad\mu\in\A.
\end{equation}
It is clear that the functions $\Phi_{i}$ are analytic in $\A$. Since 
$$1-\varphi(\mu)\mu=(\mu^{2}+2a\mu+1)/(1+2a\mu),$$ 
one derives from the second expressions in (\ref{def-Phi}) that the $\Phi_{i}$ extend analytically to $\D$, except at the point $z_{1}$, where we denote by $z_{1}$ and $z_{2}$ the two roots of $z^{2}+2az+1=0$,
$$z_{1}=-a+\sqrt{a^{2}-1}\in D,\qquad z_{2}=\varphi(z_{1})=-a-\sqrt{a^{2}-1}\in\C\setminus\D.$$
At the point $z_{1}$, the functions $\Phi_{i}$ have a polar singularity of order at most $2m-1$. We also note that the $\Phi_{i}$ have a zero of order at least $m$ at $-1/2a$.

Next, from the fact that $\varphi$ is involutive, follows that, for any function $g(z)$ on $\T$,
$$
\int_{\T}\frac{z^{m-1}\bar g(1/z)}{(1-\varphi(\mu)z)^{m}(z-\mu)^{m}}dz
=-\int_{\T}\bar{\frac{z^{m-1}g(z)}{(1-\bar\mu z)^{m}(z-\varphi(\bar\mu))^{m}}dz}.
$$
Therefore, the relation (\ref{rel-glob3}) can be rewritten as 
\begin{equation}\label{rel-glob4}
\Phi_{1}(\mu)-\bar{\Phi}_{1}(\varphi(\mu))=\Phi_{3}(\mu)-\bar{\Phi}_{3}(\varphi(\mu))
-i(a+1/\mu)(\Phi_{2}(\mu)-\bar{\Phi}_{2}(\varphi(\mu))),\qquad\mu\in\A.
\end{equation}
%The function $\varphi$ maps $\A$ onto itself and sends the inner disk $D$ onto $\C\setminus\D$ with the inner circle of $\A$ onto the unit circle $\T$. 
%Evaluating the above relation at $\varphi(\bar\mu)$, and taking conjugates leads to
%\begin{equation*}%\label{rel-glob4}
%\bar{\Phi}_{1}(\varphi(\mu))-\Phi_{1}(\mu)=\bar{\Phi}_{3}(\varphi(\mu))-\Phi_{3}(\mu)
%+i(a+1/\varphi(\mu))(\bar{\Phi}_{2}(\varphi(\mu))-\Phi_{2}(\mu)),\qquad\mu\in\A.
%\end{equation*}
%Substracting the second equation from the first one, and dividing by 2, we obtain
%\begin{equation*}%\label{rel-glob4}
%\Phi_{1}(\mu)-\bar{\Phi}_{1}(\varphi(\mu))=\Phi_{3}(\mu)-\bar{\Phi}_{3}(\varphi(\mu))
%-(i/2)(1/\mu-1/\varphi(\mu))(\Phi_{2}(\mu)-\bar{\Phi}_{2}(\varphi(\mu))),\qquad\mu\in\A.
%\end{equation*}
%Next, we observe that
%$$\frac12\left(\frac1\mu-\frac{1}{\varphi(\mu)}\right)=\frac{1+a\mu}{\mu}=
%-\frac{1+a\varphi(\mu)}{\varphi(\mu)},$$
%so that the last equation may be rewritten as
%\begin{equation}\label{rel-glob5}
%\Phi_{1}(\mu)-\bar{\Phi}_{1}(\varphi(\mu))=\Phi_{3}(\mu)-\bar{\Phi}_{3}(\varphi(\mu))
%-a(i\Phi_{2})(\mu)+a(\bar{i\Phi_{2}})(\varphi(\mu))-\Psi_{2}(\mu)+\bar{\Psi}_{2}(\varphi(\mu)),
%\end{equation}
%for $\mu\in\A$, where we have set
%$$\Psi_{2}(\mu):=i\Phi_{2}(\mu)/\mu.$$
Note that $g_{2}(0)=0$ since the Fourier coefficient of $u_{t}(z+a)$ of order 0 vanishes. In view of (\ref{def-Phi}), this entails that $\Phi_{2}(0)=0$ so that the function $\Phi_{2}(\mu)/\mu$ is analytic at 0. 
% EXPRESSION INTEG DE PSI_{2}
%It also has expressions similar to those in (\ref{def-Phi}). Indeed, with 
%$${g}_{2}(z) :=z\tilde{g}_{2}(z),\qquad z^{m}=(z-\mu)^{m}+\mu Q_{m}(z,\mu),$$ 
%where $Q_{m}(z,\mu)$ is a polynomial of degree $m-1$ in each variable, we have, for $\mu\in\D$,
%\begin{align}\notag
%\Psi_{2}(\mu) & =\frac{i}{\mu}\int_{\T}\frac{z^{m}\tilde{g}_{2}(z)}{(1-\varphi(\mu)z)^{m}(z-\mu)^{m}}dz
%=i\int_{\T}\frac{Q_{m}(z,\mu)\tilde{g}_{2}(z)}{(1-\varphi(\mu)z)^{m}(z-\mu)^{m}}dz
%\\ \label{exp-psi2}
%& =\frac{-2\pi}{(m-1)!}\left(\frac{Q_{m}(z,\mu)\tilde{g}_{2}(z)}{(1-\varphi(\mu)z)^{m}}
%\right)^{(m-1)}(\mu).
%\end{align}
%%%%%%%%%%%
%Denoting
%$$\Phi(\mu):=\Phi_{1}(\mu)-\Phi_{3}(\mu)+a(i\Phi_{2})(\mu)+\Psi_{2}(\mu),$$
%equation (\ref{rel-glob5}) becomes
%\begin{equation}\label{rel-glob6}
%\Phi(\mu)-\bar{\Phi}(\varphi(\mu))=0,\qquad\mu\in\A,
%\end{equation}
%where $\Phi(\mu)$ is analytic in $\D$, except at $z_{1}$ where it has a pole of order  at most $2m-1$. It also has a zero of order at least $m$ at $-1/2a$.
%%%%%%%%%%%%%%%%%
Multiplyimg both sides of the equation by 
$$S(\mu)=(\mu-z_{1})^{2m-1}(\mu-z_{2})^{2m-1}/(2a\mu+1)^{m},$$
and rearranging terms, we get
\begin{multline*}
S(\mu)(\Phi_{1}(\mu)-\Phi_{3}(\mu)+i(a+1/\mu)\Phi_{2}(\mu))\\=
S(\mu)(\bar{\Phi}_{1}(\varphi(\mu))-\bar{\Phi}_{3}(\varphi(\mu))
+i(a+1/\mu)\bar{\Phi}_{2}(\varphi(\mu))).
\end{multline*}
The function on the left-hand side is analytic in $\D$. Moreover,
the function $\varphi$ maps $\A$ onto itself and sends the inner disk $D$ onto $\C\setminus \D$ with the inner circle of $\A$ onto the unit circle $\T$. Also, $-1/2a$ and $\infty$ map each others by $\varphi$.
Hence, the term on the right-hand side is analytic in $\C\setminus\D$ and has a polar singularity of order $2m-2$ at infinity. Equating the left-hand side to the part in the right-hand side that is analytic in $\D$, we get
$$S(\mu)\Phi_{1}(\mu)=S(\mu)\Psi(\mu)+P_{2m-2}(\mu),
\qquad\Psi(\mu):=\Phi_{3}(\mu)-i(a+1/\mu)\Phi_{2}(\mu),$$
where $\Psi(\mu)$ is a known function depending on $u_{t}$, and $P_{2m-2}(\mu)$ is a polynomial of degree at most $2m-2$, that depends on both $u_{t}$ and $u_{n}$, (hence is unknown).
%Hence, the relation (\ref{rel-glob6}) allows one to continue $\Phi$ analytically to $\bar{\C}\setminus\{z_{1},z_{2}\}$,
% and to derive that it is a rational function of the form
%\begin{equation}\label{Phi-rat}
%\Phi(\mu)=\frac{2i\pi}{(m-1)!}\frac{(2a\mu+1)^{m}P_{2m-2}(\mu)}{(\mu-z_{1})^{2m-1}(\mu-z_{2})^{2m-1}},
%\end{equation}
%where $P_{2m-2}(\mu)$ is some polynomial of degree at most $2m-2$ (note that 
Making use of the second expressions in (\ref{def-Phi}) of the $\Phi_{i}$ functions and applying Leibniz rule on the $(m-1)$-th derivatives, we readily obtain that $h_{1}(z)$ satisfies the differential equation of order $m-1$,
\begin{equation}\label{diff-eq}
\sum_{p=0}^{m-1}\alpha_{p}z^{m-1-p}(z^{2}+2az+1)^{p}h_{1}^{(p)}(z)
=H(z)+P_{2m-2}(z),%\quad \alpha_{p}=(-1)^{m-1+p}\frac{(2m-p-2)!}{(m-p-1)!p!},
\end{equation}
where
$$
\alpha_{p}=(-1)^{m-1+p}\frac{(2m-p-2)!}{(m-p-1)!p!},\qquad p=0,\ldots,m-1,
$$
and $H(z)$ is a known function, analytic in $\D$, that is given in terms of $h_{2}(z)$ and $h_{3}(z)$ by
$$H(z)=\sum_{p=0}^{m-1}\alpha_{p}z^{m-1-p}(z^{2}+2az+1)^{p}
(h_{3}^{(p)}(z)-i(a+1/z)h_{2}^{(p)}(z)).$$
%\begin{lemma}
%Assume that the global relation (\ref{rel-glob2}) holds true.
%Then the functions
%%$$h(z)=h_{1}(z)-h_{3}(z)+iah_{2}(z)+iQ_{m}(z,\mu)\tilde{g}_{2}(z),$$ 
%$$h(z)=h_{1}(z)-h_{3}(z)+iah_{2}(z),$$ 
%and $h_{2}(z)$ satisfy a differential relation%equation of order $m-1$,
%\begin{equation}\label{diff-eq}
%\sum_{p=0}^{m-1}\alpha_{p}z^{m-1-p}(z^{2}+2az+1)^{p}(zh^{(p)}(z)+ih_{2}^{(p)}(z))
%=zP_{2m-2}(z),\quad \alpha_{p}=(-1)^{m-1+p}\frac{(2m-p-2)!}{(m-p-1)!p!},
%\end{equation}
%where $P_{2m-2}(z)$ is a polynomial of degree at most $2m-2$.
%\end{lemma}
%\begin{proof}
%%function $h$ satisfies the differential equation
%\end{proof}
By Lemma \ref{eq-diff-pol} which, for convenience, we have stated and proved after the present proof, there exists a unique solution $\tilde P_{2m-2}(z)$, polynomial of degree at most $2m-2$, to the differential equation
$$
\sum_{p=0}^{m-1}\alpha_{p}z^{m-1-p}(z^{2}+2az+1)^{p}\tilde P_{2m-2}^{(p)}(z)
=P_{2m-2}(z).
$$
Hence, $h_{1}(z)-\tilde P_{2m-2}(z)$ coincides with the solution $h(z)$, analytic at $z_{1}$, of the differential equation
$$
\sum_{p=0}^{m-1}\alpha_{p}z^{m-1-p}(z^{2}+2az+1)^{p}h^{(p)}(z)=H(z).
$$
%that is analytic in a neighborhood of $z_{1}$ : EXISTENCE ET CONSTRUCTION A JUSTIFIER\\
% we obtain that
%$$h_{1}(z)=\tilde h(z)+\tilde P_{2m-2}(z),$$
%where 
Note that the function $h(z)$ can be explicitely computed as a series expansion around $z_{1}$, see Lemma \ref{eq-diff-pol}. We know that $h_{1}(z)$ vanishes at the origin with order $m-1$, hence it can be written as
$$h_{1}(z)=z^{m-1}(\tilde h(z)+\tilde P_{m-1}(z)),$$
where $\tilde h(z)$ is known and $\tilde P_{m-1}(z)$ is some polynomial of degree at most $m-1$. From the link between $f(z)$ and $h_{1}(z)$ follows that
$$z^{m-1}f(z)=z^{m-1}(\tilde h(z)+\bar{\tilde h}(1/z))+Q_{2m-2}(z),$$
where 
$$Q_{2m-2}(z)=z^{m-1}(\tilde P_{m-1}(z)+\bar{\tilde P}_{m-1}(1/z))$$ 
is a polynomial of degree at most $2m-2$. From the global relation (\ref{rel-glob3}) and the fact that $\tilde h(z)$ is known, 
%and with $Q_{3m-3}(z)=z^{m-1}Q_{2m-2}(z)$, 
we derive that the following integral is also known for $\mu\in\A$,
\begin{align*}
\frac{S(\mu)}{2i\pi}\int_{\T}\frac{Q_{2m-2}(z)}{(1-\varphi(\mu)z)^{m}(z-\mu)^{m}}dz
& =\frac{S(\mu)}{(m-1)!}\left(\frac{Q_{2m-2}(z)}{(1-\varphi(\mu)z)^{m}}
\right)^{(m-1)}(\mu)
\\
& =\frac{1}{(m-1)!}\sum_{p=0}^{m-1}\alpha_{p}\mu^{m-1-p}(\mu^{2}+2a\mu+1)^{p}
Q_{2m-2}^{(p)}(\mu)
\end{align*}
Consequently, $Q_{2m-2}(z)$ solves a differential equation of the type (\ref{diff-eq-H}) with a second member which is necessarily a polynomial, hence analytic at $z_{1}$. From Lemma \ref{eq-diff-pol}, we know that $Q_{2m-2}$ is uniquely determined and can be recovered from this equation. Therefore, $\tilde P_{m-1}(z)$, $h_{1}(z)$, $g_{1}(z)$, and finally $f(z)$ are also uniquely recovered.
%Since the functions $h_{i}(z)$, $i=1,2,3$, are all analytic at $z_{1}$, equation (\ref{diff-eq}) entails that
%$$h_{1}^{(p)}(z_{1})=h_{3}^{(p)}(z_{1})-i(a+1/z_{1})h_{2}^{(p)}(z_{1}),\qquad p=0,\ldots,m-1.$$
%Consequently, the function $h_{1}(z)$ is uniquely determined by the differential equation (\ref{diff-eq}).
%
%We are now in a position to finish the proof of Theorem \ref{corresp}.
%%\begin{proof}[Proof of Proposition \ref{inject}]
%From the previous lemma, we know that $g$ is a polynomial of degree at most $m-1$. Hence
%$$F(z):=z^{m-1}f(z)=z^{m-1}g(z)+z^{m-1}\bar g(1/z)$$
%is a polynomial of degree at most $2m-2$. Also, from (\ref{rel-glob3}),
%\begin{equation*}
%\int_{\T}\frac{F(z)}{(1-\varphi(\mu)z)^{m}(z-\mu)^{m}}dz
%=\left(\frac{F(z)}{(1-\varphi(\mu)z)^{m}}\right)^{(m-1)}(\mu)=0,\qquad \mu\in\A.
%\end{equation*}
%Applying Leibniz rule to evaluate the previous derivative, we obtain that
%$$
%\sum_{p=0}^{m-1}\alpha_{p}\frac{\mu^{m-1-p}}{(\mu^{2}+2a\mu+1)^{m-1-p}}F^{(p)}(\mu)=0,\qquad \mu\in\A.$$
%The expression in the left-hand side is a rational function, hence it vanishes everywhere, which entails in particular that
%$$F^{(p)}(z_{1})=F^{(p)}(z_{2})=0,\qquad p=0,\ldots,m-1.$$
%Since $F$ is a polynomial of degree at most $2m-2$, it is identically 0 and the same holds true for the function $f$ on $\T$ and the function $u_{n}$ on $\CC_{a}$.
\end{proof}
\begin{lemma}\label{eq-diff-pol}
Let $H(z)$ be a function analytic in a neighborhood of $z_{1}$. There exists at most one solution $h(z)$ of the differential equation
\begin{equation}\label{diff-eq-H}
\sum_{p=0}^{m-1}\alpha_{p}z^{m-1-p}(z^{2}+2az+1)^{p}
h^{(p)}(z)
=H(z),
%\quad \alpha_{p}=(-1)^{m-1+p}\frac{(2m-p-2)!}{(m-p-1)!p!},
\end{equation}
which is analytic in a neighborhood of $z_{1}$.
Moreover, if $H(z)=P_{2m-2}(z)$ is a polynomial of degree at most $2m-2$, then
%\begin{equation}\label{diff-eq-0}
%\sum_{p=0}^{m-1}\alpha_{p}z^{m-1-p}(z^{2}+2az+1)^{p}
%h^{(p)}(z)
%=P_{2m-2}(z),
%%\quad \alpha_{p}=(-1)^{m-1+p}\frac{(2m-p-2)!}{(m-p-1)!p!},
%\end{equation}
% that is analytic in a neighborhood of $z_{1}$.
the solution $h(z)$ exists and is also a polynomial of degree at most $2m-2$. 
\end{lemma}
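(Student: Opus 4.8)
The plan is to analyze the linear differential operator
$$
L[h](z) := \sum_{p=0}^{m-1}\alpha_{p}z^{m-1-p}(z^{2}+2az+1)^{p}h^{(p)}(z)
$$
by examining its behavior at the singular point $z_{1}$, which is a regular singular point since $z^{2}+2az+1=(z-z_{1})(z-z_{2})$ vanishes simply there. First I would pass to the local variable $t=z-z_{1}$ and compute the indicial equation of $L$ at $t=0$. The leading term as $t\to 0$ comes from the highest-order derivative $p=m-1$: the coefficient $z^{0}(z^{2}+2az+1)^{m-1}$ behaves like $(z_{1}-z_{2})^{m-1}t^{m-1}$, so this term contributes $\alpha_{m-1}(z_{1}-z_{2})^{m-1}t^{m-1}h^{(m-1)}$, while the lower-order terms $p<m-1$ contribute at most $t^{p}h^{(p)}$, i.e. also order $t^{m-1}$ after the indicial substitution $h=t^{\rho}$ only for $p=m-1$, and strictly higher order for $p<m-1$ --- one must check this carefully, since the factor $z^{m-1-p}$ is nonzero at $z_{1}$. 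Thus the indicial polynomial is $\alpha_{m-1}(z_{1}-z_{2})^{m-1}\rho(\rho-1)\cdots(\rho-m+2)$ (up to the $z^{m-1-p}$ factors being evaluated at $z_1$ — I'd recompute), whose roots are $\rho=0,1,\dots,m-2$, all nonnegative integers. An analytic (i.e.\ holomorphic) solution at $z_{1}$ corresponds to the Frobenius solution attached to the largest exponent $\rho=m-2$ combined with the requirement that no logarithmic or negative-power terms appear; the key point for uniqueness is that the difference of two analytic solutions would be an analytic solution of the homogeneous equation $L[h]=0$, and I would argue that such a solution must vanish identically by matching Taylor coefficients order by order against the recurrence forced by $L[h]=0$, using that the recurrence is solvable for all but finitely many indices and that the finitely many free parameters are killed by analyticity. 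Actually the cleanest route: substitute a formal power series $h(z)=\sum_{n\ge 0}b_n(z-z_1)^n$ into $L[h]=H$, obtain a triangular recurrence for the $b_n$, and observe the recurrence determines $b_n$ uniquely in terms of $H$ once the first few are fixed by the lowest-order relations — establishing both existence of a formal solution and uniqueness of the analytic one.

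Next, for existence and convergence: since $H$ is analytic near $z_{1}$ and $L$ has a regular singular point there with nonnegative integer exponents, standard Frobenius theory (see e.g.\ the theory of linear ODEs at regular singular points) guarantees that the formal series solution converges in a neighborhood of $z_{1}$, giving the analytic solution $h$. So the first assertion is complete: at most one analytic solution, and at least one exists.

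For the polynomial case $H=P_{2m-2}$ of degree $\le 2m-2$: here I would argue by a degree/dimension count. The operator $L$ maps the finite-dimensional space $\mathcal{P}_{d}$ of polynomials of degree $\le d$ into polynomials; one checks from the explicit form that $L$ raises degree by exactly $m-1$ on the top-degree part only if the leading coefficients don't cancel, and more precisely I would show $L$ maps $\mathcal{P}_{2m-2}$ into $\mathcal{P}_{3m-3}$ but in fact the image of $\mathcal{P}_{2m-2}$ under $L$ has dimension $2m-1=\dim\mathcal{P}_{2m-2}$, i.e.\ $L$ restricted to $\mathcal{P}_{2m-2}$ is injective. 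Injectivity follows from the uniqueness already proved (a polynomial killed by $L$ is in particular analytic at $z_1$, hence zero). Then, since $L:\mathcal{P}_{2m-2}\to L(\mathcal{P}_{2m-2})$ is a bijection onto a $(2m-1)$-dimensional space, it suffices to verify that every polynomial of degree $\le 2m-2$ actually lies in that image. The slick way: the leading term of $L[h]$ for $h$ of degree exactly $j$ comes from summing the contributions $\alpha_p \binom{j}{p}(\text{descending factorial})$ times $z^{j+m-1-p}\cdot z^{2p}$-type terms; the coefficient structure $\alpha_p=(-1)^{m-1+p}\tfrac{(2m-p-2)!}{(m-p-1)!p!}$ is exactly the one arising from differentiating $(1-wz)^{-m}$, so $L$ is essentially the operator that appears when expressing the $(m-1)$-st derivative of $Q(z)(1-wz)^{-m}$, and one can read off directly from that identity — as is done in the main proof via the second expression in (\ref{def-Phi}) — that $L$ preserves the flag by degree appropriately and is invertible on $\mathcal{P}_{2m-2}$. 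I would make this precise by noting $L[h](\mu)=\tfrac{(m-1)!}{2i\pi}S(\mu)^{-1}\cdot$(something), or more directly that $\frac{(m-1)!}{(m-1)!}\big(h(z)(1-\varphi(\mu)z)^{-m}\big)^{(m-1)}\big|_{z=\mu}$ equals $(\mu^2+2a\mu+1)^{-?}L[h](\mu)$ up to the explicit power of $(2a\mu+1)$; evaluating at polynomial $h$ of degree $\le 2m-2$ shows the result is a polynomial of degree $\le 2m-2$ divided by the appropriate power, and surjectivity onto $\mathcal{P}_{2m-2}$ follows by inverting this triangular correspondence.

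The main obstacle I anticipate is the bookkeeping in the indicial-equation computation at $z_{1}$: one must be careful that the factors $z^{m-1-p}$ (nonvanishing at $z_{1}$) and the binomial/falling-factorial coefficients from Leibniz combine so that the exponents at $z_{1}$ are precisely $0,1,\dots,m-2$ with no coincidences forcing logarithmic terms — equivalently, that the indicial polynomial has simple roots at these integers. Relatedly, for the polynomial statement, the delicate point is verifying \emph{surjectivity} of $L$ on $\mathcal{P}_{2m-2}$ (injectivity being free from uniqueness); the cleanest resolution is to avoid a head-on degree computation and instead invoke the generating-function identity already exploited in the proof of Theorem \ref{corresp}, which packages all the $\alpha_p$ coefficients and makes the degree-preservation transparent.
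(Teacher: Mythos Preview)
Your indicial computation at $z_{1}$ is wrong, and the error is exactly the one you flagged but then dismissed: every term $p=0,\dots,m-1$ contributes at leading order, not just $p=m-1$. Writing $z^{2}+2az+1=(z-z_{1})(z-z_{2})$, the $p$-th term is $\alpha_{p}z^{m-1-p}(z-z_{1})^{p}(z-z_{2})^{p}h^{(p)}$; on $h=(z-z_{1})^{\rho}$ this produces $\alpha_{p}z_{1}^{m-1-p}(z_{1}-z_{2})^{p}\rho(\rho-1)\cdots(\rho-p+1)(z-z_{1})^{\rho}+O((z-z_{1})^{\rho+1})$, so the power of $(z-z_{1})$ is the same for all $p$. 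The indicial polynomial is therefore $\sum_{p}\alpha_{p}z_{1}^{m-1-p}(z_{1}-z_{2})^{p}\rho^{\underline{p}}$, not $\rho(\rho-1)\cdots(\rho-m+2)$. Had the roots been $0,1,\dots,m-2$ as you claim, the Frobenius solution attached to $\rho=m-2$ would be a nonzero analytic homogeneous solution, and uniqueness would fail outright. What the paper actually shows---and what your ``cleanest route'' would discover---is that the indicial polynomial has \emph{no} nonnegative integer root: after substituting $h=\sum a_{n}(z-z_{1})^{n}$ one gets a recurrence $\sum_{\ell=0}^{m-1}\beta_{\ell}^{n}a_{n-\ell}=H_{n}$, and the paper checks $\beta_{0}^{n}\neq 0$ for all $n\ge 0$ (using that $(z_{1}-z_{2})/z_{1}<0$ makes $\beta_{0}^{n}$ a sum of terms of one sign). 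So there are no free parameters at all: every $a_{n}$ is determined, starting from $a_{0}$.

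For the polynomial assertion your linear-algebra reduction is cleaner in spirit but has a real gap. Injectivity of $L$ on $\mathcal{P}_{2m-2}$ is free from uniqueness, but you then need $\mathcal{P}_{2m-2}\subseteq L(\mathcal{P}_{2m-2})$; since both spaces have dimension $2m-1$ this forces $L(\mathcal{P}_{2m-2})=\mathcal{P}_{2m-2}$, i.e.\ that $L$ does not raise degree on $\mathcal{P}_{2m-2}$---yet term by term $L$ raises degree by $m-1$. So you are asking for $m-1$ separate cancellations in the top coefficients of $L[z^{j}]$ for each $j=m,\dots,2m-2$, and neither your degree count nor the generating-function remark supplies them. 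The paper avoids this by staying inside the recurrence: it proves the combinatorial identity $\beta_{\ell}^{n}=0$ for $n=2m-1+N$, $\ell>N$, $N=0,\dots,m-2$ (via a derivative-of-$x^{2m-k-2}(1+x)^{\ell}$ trick evaluated at $x=-1$), and then inducts to get $a_{n}=0$ for all $n\ge 2m-1$. That identity is precisely the hidden content of your surjectivity claim.
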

\begin{proof}
The two roots $z_{1}$ and $z_{2}$ of $z^{2}+2az+1$ are regular singular points of the differential equation (\ref{diff-eq-H}). We rewrite it in a neighborhood of $z_1=-a+\sqrt{a^2-1}\in D$, leading to
$$
\sum_{p=0}^{m-1} \alpha_p (z+z_1)^{m-1-p} (z+\beta)^p z^p \tilde{h}^{(p)}(z)
=H(z+z_1),
$$
where $\beta=2\sqrt{a^2-1}$ and $\tilde{h}(z)=h(z+z_1)$. Denoting by $\displaystyle \sum_{k=0}^{\infty} a_k z^k$ the series expansion of $\tilde{h}$  in a neighborhood of 0, we have
$$
 z^p \tilde{h}^{(p)}(z)=\sum_{k=p}^{\infty}k\ldots(k-p+1)a_kz^k
$$
and therefore
\begin{equation}\label{ed1}
\sum_{p=0}^{m-1} \alpha_p (z+z_1)^{m-1-p} (z+\beta)^p\sum_{k=p}^{\infty}%\underbrace{
k\ldots(k-p+1)
%}_{p\text{ terms}}
a_kz^k=H(z+z_1).
\end{equation}
%Let $n\geq 2m-1$. 
We compute the coefficient of $z^n$ in the left-hand side of (\ref{ed1}) which equals
$$
%{1\over n!}{\partial^n\over\partial z^n}
\frac{1}{n!}\left(\sum_{p=0}^{m-1} \alpha_p (z+z_1)^{m-1-p} (z+\beta)^p\sum_{k=p}^{\infty}a_k \frac{k!}{(k-p)!}z^k\right)^{(n)}(0).
$$
By Leibniz rule we get
$$
\frac{1}{n!}\sum_{p=0}^{m-1}\alpha_{p}
%(-1)^{m-p+1}{(2m-p-2)!\over (m-p-1)!p!}\times\qquad\qquad\qquad\qquad\qquad\qquad\qquad\qquad\qquad\qquad\qquad\qquad\qquad
%$$
%$$
%\quad\qquad\qquad\times
\sum_{k_1+k_2+k_3=n}{1\over k_1!k_2!k_3!}{(m-1-p)!\over(m-1-p-k_1)!} z_1^{m-1-p-k_1} {p!\over (p-k_2)!}\beta^{p-k_2} a_{k_3}{ (k_3!)^2\over (k_3-p)!},
$$
where we adopt the convention that ${1/q!}=0$ if $q$ is a negative integer.
Taking into account the definition of $\alpha_{p}$, this simplifies to
$$
\frac{1}{n!}\sum_{p=0}^{m-1}(-1)^p(2m-p-2)!\left({\beta\over z_1}\right)^p\sum_{k_1+k_2+k_3=n}{k_3!z_1^{-k_1}\beta^{-k_2}\over k_1!k_2!(m-1-p-k_1)!(p-k_2)!(k_3-p)!}a_{k_3}.
$$
Let us write $k_3=n-\ell$ where $\ell$ is a new index. Then $k_1+k_2=\ell$ in the previous sum. Since $k_1\leq m-p-1$ and $k_2\leq p$, we deduce that $\ell$ takes its values in the set $\{0,\ldots, m-1\}$.
We thus obtain
$$
\frac{1}{n!}\sum_{\ell=0}^{m-1}(n-\ell)!a_{n-\ell}\sum_{p=0}^{m-1}{(-1)^p(2m-p-2)!\over (n-\ell-p)!}\left({\beta\over z_1}\right)^p\sum_{k_1+k_2=\ell}
{z_1^{-k_1}\beta^{-k_2}\over k_1!k_2!(m-1-p-k_1)!(p-k_2)!},
$$
or writing $k_1=k-p$ and $k_2=p+\ell-k$ in terms of a new index $k$, 
$$
\frac{1}{n!}\sum_{\ell=0}^{m-1}a_{n-\ell}{(n-\ell)!\over\beta^\ell}\sum_{p=0}^{m-1}(-1)^p{(2m-p-2)!\over (n-\ell-p)!}\sum_{k=0}^{m-1}{({\beta/z_1})^k\over (k-p)!(\ell+p-k)!(m-1-k)!(k-\ell)!}.
$$
Interchanging the two last sums, we obtain
$$
\sum_{\ell=0}^{m-1}a_{n-\ell}\beta_{\ell}^{n}
%{(n-\ell)!\over\beta^\ell}\sum_{k=0}^{m-1}
%{({\beta/z_1})^k\over (m-1-k)!(k-\ell)!}\sum_{p=0}^{m-1} {(-1)^p (2m-p-2)!\over (k-p)!(n-\ell-p)!(\ell+p-k)!}
,\qquad n\geq 0,%2m-1,
$$
where $a_{n}=0$, $n<0$, and we have set, for $\ell\in\{0,\ldots,m-1\}$,% and $n\geq 2m-1$, 
\begin{equation}\label{beta}
\beta_\ell^n:=\frac{(n-\ell)!}{n!\beta^\ell}\sum_{k=0}^{m-1}{({\beta/ z_1})^k\over (m-1-k)!(k-\ell)!}\sum_{p=0}^{m-1} {(-1)^p (2m-p-2)!\over (k-p)!(n-\ell-p)!(\ell+p-k)!}.
\end{equation}
%then
%$$
%\sum_{\ell=0}^{m-1}a_{n-\ell}\beta_\ell^n=0.
%$$
If $\ell=0$, the second sum in (\ref{beta}) equals
$$
\sum_{p=0}^{m-1} \frac{(-1)^p(2m-p-2)!}{(k-p)!(n-p)!(p-k)!}=
(-1)^{k}\frac{(2m-k-2)!}{(n-k)!},%\neq 0,
$$
and
$$\beta_{0}^{n}=\sum_{k=0}^{m-1}\frac{(-\beta/z_{1})^{k}(2m-k-2)!}
{(m-1-k)!k!(n-k)!}\neq 0,$$
since $\beta/z_{1}<0$. The nonvanishing of the coefficients $\beta_{0}^{n}$ implies that each coefficient $a_{n}$, $n>0$, is uniquely determined whence the first assertion of the lemma.

%We have thus shown (\ref{fact2}).
Next, in order to prove the vanishing of all coefficients $a_{n}$, $n\geq 2m-1$, when the right-hand side is a polynomial of degree at most $2m-2$, it is sufficient to show that
\begin{equation}\label{fact1}
\beta_{N+1}^n=\cdots=\beta_{m-1}^n=0,\quad N\in\{0\ldots,m-2\}, \quad n=2m-1+N.
\end{equation}
%and
%\begin{equation}\label{fact2}
%\beta_{0}^{n}\neq 0,\quad n\geq 2m-1.
%\end{equation}
%We first prove (\ref{fact1}).
Let $N$ be fixed in $\{0\ldots,m-2\}$ and $\ell\in\{N+1,\ldots,m-1\}$. From our convention on the factorials of negative integers, the second sum in (\ref{beta}) actually simplifies to
\begin{align*}
%\sum_{p=0}^{m-1}{(-1)^p (2m-p-2)!\over (k-p)!(n-\ell-p)!(\ell+p-k)!}=
\sum_{p=k-\ell}^k{(-1)^p (2m-p-2)!\over (k-p)!(n-\ell-p)!(\ell+p-k)!}
& =(-1)^{k-\ell}\sum_{p=0}^\ell{(-1)^p (2m-k+\ell-p-2)!\over (\ell-p)!p!(n-k-p)!}
\\
& =(-1)^{k-\ell}\sum_{p=0}^\ell{(2m-k+\ell-p-2)!\over(2m-1+N-k-p)!}{(-1)^p\over p!(\ell-p)!}.
\end{align*}
The last expression equals the derivative of order $\ell-(N+1)$ of the polynomial
$$
(-1)^{N+1-\ell}\sum_{p=0}^\ell{x^{2m-k+\ell-p-2}\over p!(\ell-p)!}
$$
taken at $x=-1$. Thanks to the binomial identity, this polynomial rewrites as
$$
(-1)^{N+1-\ell}x^{2m-k-2}(1+x)^\ell/\ell!.
$$
We then get
$$
\sum_{p=0}^{m-1}{(-1)^p (2m-p-2)!\over (k-p)!(n-\ell-p)!(\ell+p-k)!}=0
$$
which shows (\ref{fact1}).
%$$
%\beta_{N+1}^n=\cdots=\beta_{m-1}^n=0.
%$$
\end{proof}
\subsection{Case of an even positive integer coefficient $\alpha$}
Here we assume $\alpha=2(m+1)$, $m\in\N$. First, let us check that the global relation (\ref{rel-glob}) does not allow the reconstruction of $u_{n}$ in this case. Indeed, integrating on $\T$ instead of $\CC_{a}$, the part involving $u_{n}$ in (\ref{rel-glob}) now equals
\begin{equation}\label{rel-glob-pos-pair}
\int_{\T}\frac{(z-(k-a))^{m}(z+1/(k+a))^{m}f(z)}{z^{m+1}}dz
%\\=
%\int_{\T}\frac{(z-(k-a))^{m}(z+1/(k+a))^{m}(y+ik)u_{t}(z+a)}
%{z^{m+1}}dz,
\end{equation}
where $f$ is defined by (\ref{def-f}) and $k\in\C\setminus(\DD_{a}\cup\DD_{-a})$. With the definitions of $g_{1}, \mu, \varphi$ and $\A$ as in the previous case, we let, for $\mu\in\A$,
\begin{equation}\label{def-phi-pos}
\Phi_{1}(\mu):=\int_{\T}\frac{(z-\mu)^{m}(1-\varphi(\mu)z)^{m}g_{1}(z)}{z^{m+1}}dz
=\frac{2i\pi}{m!}((z-\mu)^{m}(1-\varphi(\mu)z)^{m}g_{1}(z))^{(m)}(0),%\qquad\mu\in\A,
\end{equation}
%It is clear that $\Phi$ is analytic in $\A$ and extends analytically to $\D$ except at $\mu=-1/2a$ where it has a pole of order at most $m$. 
so that, as before, (\ref{rel-glob-pos-pair}) can be rewritten as
%\begin{equation}\label{rel-glob4-pos}
$\Phi_{1}(\mu)-\bar{\Phi}_{1}(\varphi(\mu))$. Since this expression only involves the derivatives of $g_{1}$ at 0 of order up to $m$, it does not contain enough information to reconstruct $g_{1}(z)$ and a fortiori the function $f(z)$.
%$=\Psi(\mu),\qquad\mu\in\A,
%\end{equation}
%where
%$$\Psi(\mu):=\frac{1}{\mu}\int_{\T}
%\frac{(z-\mu)^{m}(1-\varphi(\mu)z)^{m}(\mu y-i(a\mu+1))u_{t}(z+a)}{z^{m+1}}dz.$$
\begin{proof}[Proof of Theorem \ref{corresp} when $\alpha=2(m+1)$, $m\in\N$]
Expressing the closed differential form (\ref{defW2}) in terms of the directional derivatives, thanks to (\ref{polar}), and integrating along $\CC_{a}$, we now obtain
$$\int_{\CC_{a}}
\frac{((y+ik)xu_{t}(z)-x^{2}u_{n}(z)-(2m+1)(z\bar z+iyk)u(z))x^{2m}}
{(k-z)^{m+1}(k+\bar{z})^{m+1}}ds=0,$$
for $k\in\C\setminus(\DD_{a}\cup\DD_{-a})$.
Since we assume that $u$ and $u_{t}$ are known on the boundary $\CC_{a}$ of the domain, our problem is to recover $u_{n}$ from the knowledge, for $k\in\C\setminus(\DD_{a}\cup\DD_{-a})$, of the integral
$$\int_{\CC_{a}}\frac{x^{2m+2}u_{n}(z)}{(k-z)^{m+1}(k+\bar{z})^{m+1}}ds,$$
which is precisely what was done in the previous case (except that the definition (\ref{def-f}) of the function $f$ there has to be replaced with $f(z)=(x+a)^{2m+2}u_{n}(z+a)$).
\end{proof}

\obeylines
\texttt{

\medskip

\medskip
Slah Chaabi, Stephane Rigat, Franck Wielonsky
slah.chaabi, stephane.rigat, franck.wielonsky@univ-amu.fr
Laboratoire I2M - UMR CNRS 7373
Universit\'e Aix-Marseille
CMI 39 Rue Joliot Curie
F-13453 Marseille Cedex 20, FRANCE
}

\end{document}